\documentclass{amsart}
\usepackage{a4wide}
\usepackage{amsfonts,amsmath,amsthm,amssymb}
\usepackage[linesnumbered,ruled,noend]{algorithm2e}
\usepackage{comment}
\usepackage{float}
\usepackage{latexsym}
\usepackage{graphicx}
\usepackage{multicol}
\usepackage{color}
\usepackage{xcolor}
\usepackage{pifont}
\usepackage{enumerate}
\usepackage{lmodern}
\usepackage{makecell}
\usepackage{adjustbox}
\usepackage{hyperref}
\usepackage{setspace}
\usepackage[all]{xy}
\usepackage{pb-diagram,pb-xy}
\usepackage{pdflscape}
\usepackage{chngpage}
\usepackage{breakurl}
\usepackage{adjustbox,lipsum}
\usepackage{graphicx}
\usepackage[justification=centering]{caption}
\usepackage{float}

\usepackage{tabu}

\makeatletter
\def\BState{\State\hskip-\ALG@thistlm}
\makeatother

\theoremstyle{plain}
\newtheorem{theorem}{Theorem}[section]
\newtheorem{lemma}[theorem]{Lemma}
\newtheorem{proposition}[theorem]{Proposition}
\newtheorem{corollary}[theorem]{Corollary}

\theoremstyle{definition}

\newtheorem*{example*}{Example}

\newcommand{\F}{\mathbb{F}}
\newcommand{\Q}{\mathbb{Q}}
\newcommand{\Z}{\mathbb{Z}}

\newcommand{\q}{\mathfrak{q}}

\DeclareMathOperator{\ord}{ord}

\begin{document}

\title[]{On perfect powers that are sums of cubes of a nine term arithmetic progression}

\author{Nirvana Coppola}
\address{Vrije Universiteit Amsterdam, de Boelelaan 1111, Room 9A94, 1081 HV, Amsterdam, The Netherlands}
\email{nirvanac93@gmail.com}

\author{Mar Curc\'o-Iranzo}
\address{Hans-Freudental Gebouw, Utrecht University, Budapestlaan 6, Room 5.03, 3584 CD Utrecht, The Netherlands}
\email{m.curcoiranzo@uu.nl}

\author{Maleeha Khawaja}
\address{School of Mathematics and Statistics, University of Sheffield, Hounsfield Road, Sheffield S3 7RH, United Kingdom}
\email{mkhawaja2@sheffield.ac.uk}

\author{Vandita Patel}
\address{School of Mathematics, University of Manchester, Oxford Road, Manchester M13 9PL, United Kingdom}
\email{vandita.patel@manchester.ac.uk}

\author{\"{O}zge \"{U}lkem}
\address{Galatasaray University, \c{C}ıra\u{g}an Cd. No:36, Istanbul, Turkey}
\email{ozgeulkem@gmail.com}

\date{\today}

\keywords{Exponential equation, Baker's Bounds, Thue equation, 
Lehmer sequences, primitive divisors. \\
Data sharing not applicable to this article as no datasets were generated or analysed during the current study.}
\subjclass[2010]{Primary 11D61, Secondary 11D41, 11D59, 11J86.}

\begin{abstract}

We study the equation 
$(x-4r)^3 + (x-3r)^3 + (x-2r)^3+(x-r)^3 + x^3 + (x+r)^3+(x+2r)^3 + (x+3r)^3 + (x+4r)^3 = y^p$, which is a natural continuation of previous works carried out by A. Arg\'{a}ez-Garc\'{i}a and the fourth author (perfect powers that are sums of cubes of a three, five and seven term arithmetic progression). Under the assumptions 
 $0 < r \leq 10^6$, $p \geq 5 $ a prime  and  $\gcd(x, r) = 1$, we show that solutions must 
satisfy $xy=0$. Moreover, we study the equation for prime exponents $2$ and $3$ in greater detail. Under the assumptions 
 $r>0$  a positive integer and  $\gcd(x, r) = 1$  we show that there are infinitely many solutions for $p=2$ and $p=3$ via explicit constructions using integral points on elliptic curves.  We use an amalgamation of methods in computational and algebraic number theory to overcome the increased computational challenge. Most notable is a significant computational efficiency obtained through appealing to Bilu, Hanrot and Voutier's Primitive Divisor Theorem and the method of Chabauty, as well as employing a Thue equation solver earlier on.

\end{abstract}
\maketitle

\section{Introduction}

 Solving Diophantine equations has always been the order of the day amongst number theorists. 
 In this paper, we explore Diophantine equations that arise from arithmetic progressions. These types of equations have been previously studied by many; 
 see \cite{bartoli2019diophantine}, \cite{bennett2020equation}, \cite{Bennett2016}, \cite{bennett2017perfect}, \cite{berczes2018diophantine}, \cite{BreStrTza}, \cite{Cassels}, \cite{cohn_1996},  
 \cite{koutsianas2018perfect}, \cite{kundu2021perfect}, 
 \cite{Patel17}, \cite{pinter2007power}, \cite{Stroeker1995}, \cite{soydan2017diophantine}, \cite{Uchiyama}, \cite{van2021sum}, \cite{zhang2014diophantine}, \cite{zhang2017diophantine4}, \cite{zhang2017diophantine} and \cite{bai2013diophantine}. We refer the reader to the state-of-the-art survey \cite[Section 2]{CCIKPUSurvey} for a comprehensive overview of these works.
 
 \medskip
 
 Specifically, we determine perfect powers that can be written as the sum of cubes of nine consecutive terms in a bounded arithmetic progression when the exponent $p \geq 5$ is a prime. 
This is a natural continuation of previous work carried out by A. Arg\'{a}ez-Garc\'ia and the fourth author (see \cite{Garcia2019}, \cite{GarciaPatel2019} and \cite{GarciaPatel2020}).
In particular, we prove the following theorem:
\begin{theorem} \label{thm:main}
Let $p \geq 5$ be a prime and $0 < r \leq 10^6$. The equation
\begin{equation} 
\label{eq:mainthm} 
(x-4r)^3 + (x-3r)^3 + (x-2r)^3+(x-r)^3 + x^3 + (x+r)^3+(x+2r)^3 + (x+3r)^3 + (x+4r)^3 = y^p \end{equation}
with $x, y, p \in \Z$ and $\gcd(x, r) = 1$ only has integer solutions which satisfy $xy = 0$.
\end{theorem}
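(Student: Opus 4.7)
The plan is to start by collapsing the left-hand side. Expanding each cube and using the symmetry identities $\sum_{k=-4}^{4}k=\sum_{k=-4}^{4}k^3=0$ together with $\sum_{k=-4}^{4}k^2=60$, equation \eqref{eq:mainthm} simplifies to
\begin{equation*}
9x\,(x^2+20r^2)=y^p.
\end{equation*}
Since $\gcd(x,r)=1$, one has $\gcd(x,x^2+20r^2)\mid 20$, and a short case analysis of the residues of $x$ and $r$ modulo $2$, $3$ and $5$ pins down both the common factor of the two integer factors on the left and the $3$-adic contribution coming from the prefactor $9$. I then work in the imaginary quadratic field $K=\mathbb{Q}(\sqrt{-5})$ with ring of integers $\mathcal{O}_K=\mathbb{Z}[\sqrt{-5}]$, unit group $\{\pm 1\}$ and class number $h_K=2$, and exploit the factorisation
\begin{equation*}
x^2+20r^2=(x+2r\sqrt{-5})(x-2r\sqrt{-5}).
\end{equation*}

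The key structural observation is that any prime ideal of $\mathcal{O}_K$ dividing both $(x+2r\sqrt{-5})$ and its conjugate must divide $(2x)$ and $(4r\sqrt{-5})$, hence lies above a rational prime in $\{2,5\}$. After peeling off a controlled contribution supported on primes above $2$, $3$ and $5$, the remaining part of the ideal $(x+2r\sqrt{-5})$ is a $p$-th power of an ideal. Because $\gcd(p,h_K)=\gcd(p,2)=1$, this $p$-th power descends to the $p$-th power of a principal ideal, and using the units $\{\pm 1\}$ I arrive at an identity
\begin{equation*}
x+2r\sqrt{-5}=\gamma\,(u+v\sqrt{-5})^p
\end{equation*}
for some integers $u,v$ and some $\gamma\in\mathcal{O}_K$ ranging over an explicit finite list parametrised by the residue class of $x$ modulo $30$. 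Taking the coefficient of $\sqrt{-5}$ converts this identity into a family of Thue equations $F_{\gamma,p}(u,v)=c$, where $F_{\gamma,p}$ is a binary form of degree $p$ and $c$ is a divisor of a bounded multiple of $r$.

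To dispose of the infinite family of prime exponents, I would next recognise $(u+v\sqrt{-5},\,u-v\sqrt{-5})$ as a Lucas (or Lehmer) pair and invoke the Primitive Divisor Theorem of Bilu, Hanrot and Voutier. This provides a primitive prime divisor of the $p$-th term of the associated sequence once $p$ exceeds an absolute constant (indeed $p>30$ in the Lucas case). Such a divisor must appear in $2r$, so the hypothesis $r\leq 10^6$ forces $p$ into the short list $\{5,7,11,13,17,19,23,29\}$. For each such $p$, each $\gamma$ and each admissible $c$, I would feed the Thue equation $F_{\gamma,p}(u,v)=c$ to an explicit Thue solver and check whether any integer solution $(u,v)$ lifts to a genuine integer solution of \eqref{eq:mainthm}; the handful of residual higher-genus curves arising in the sieve are to be handled by the method of Chabauty.

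The main obstacle is computational and organisational rather than conceptual. The layered case split on $\gcd(x,30)$, on the coset representative $\gamma$, on the divisor $c\mid 2r$ and on the prime $p\leq 29$ produces a large matrix of Thue equations that must each be solved explicitly, and with $r$ allowed to range all the way up to $10^6$ the Thue-solver step would be prohibitive if tackled naively. Invoking Bilu, Hanrot and Voutier at the very start is precisely what reduces the infinite family of exponents to a finite one, and pushing the Thue solver upstream together with reserving Chabauty for the few genuinely stubborn curves is what keeps the overall computation within reach.
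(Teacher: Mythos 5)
Your opening reductions are fine: the collapse to $9x(x^2+20r^2)=y^p$, the observation $\gcd(x,x^2+20r^2)=\gcd(x,20)$, and the general plan of a descent followed by a Lucas/Lehmer analysis and a Thue/Chabauty cleanup are all in the same spirit as the paper (which performs the descent over $\mathbb{Z}$ into twelve cases and only later descends over quadratic fields). The fatal gap is the step where you claim that the Primitive Divisor Theorem ``forces $p$ into the short list $\{5,7,11,13,17,19,23,29\}$''. It does not. What Bilu--Hanrot--Voutier yields here (in the packaged form of Patel's theorem, which the paper uses) is that for $p>7$ either $p$ divides the class number $h_{\mathbb{Q}(\sqrt{-5})}=2$ or $p$ divides $q-\left(\frac{-5}{q}\right)$ for some prime $q\mid 2r$ with $q\nmid 10$. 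The existence of a primitive divisor for $p>30$ is not itself a contradiction; it is only the congruence $q\equiv\pm1\pmod{p}$ satisfied by that divisor, combined with $q\mid 2r$, that constrains $p$ --- and since $r$ may be as large as $10^6$, this only gives $p\leq 10^6+1$. It is a powerful sieve on pairs $(r,p)$, not a uniform bound of $29$; indeed the paper's own computations show equations surviving all elimination steps at $p=31$, $37$ and $41$. The finiteness of the exponent in the paper comes instead from Mignotte's linear-forms-in-logarithms theorem applied to each ternary descent equation (bounds up to $p\leq 142861$), a step your proposal omits entirely.

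The omission is not cosmetic, because your BHV argument also breaks down structurally in half the descent cases. When $3\nmid x$ one has $x^2+20r^2=3^{p-2}\cdot(\text{bounded factor})\cdot w_2^p$, so the ideal $(x+2r\sqrt{-5})$ carries a factor $\mathfrak{p}_3^{\,p-2}$ whose exponent grows with $p$: your $\gamma$ cannot be drawn from a finite list parametrised by $x\bmod 30$, and the clean ``$p$-th Lehmer term divides $2r$'' structure is destroyed. This is exactly why the paper cannot apply its BHV-based sieve to cases 5--8 and must rely on Mignotte plus a battery of cheap eliminations (Sophie Germain's empty-set criterion, local solubility, a further descent over $\mathbb{Q}(\sqrt{-m})$) before invoking the Thue solver. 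Finally, even where your sieve applies, sending every surviving case of $F_{\gamma,p}(u,v)=c$ directly to a Thue solver is computationally infeasible at this scale: after the BHV sieve the paper still faces roughly $6.7\times10^{9}$ equations, of which only nine ultimately reach the Thue solver. To repair your argument you would need to (i) reinstate a linear-forms-in-logarithms bound on $p$, (ii) treat the $3^{p-2}$ (and $2^{p-2}$, $5^{p-1}$) cofactors correctly in the quadratic-field descent, and (iii) interpose the local elimination machinery before the Thue step.
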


Our proof is inspired by  \cite{bennett2017perfect}, \cite{Patel17} and \cite{VP21}.  The proof of Theorem~\ref{thm:main} 
uses a battery of different 
techniques used in the realm of  Diophantine equations. Our careful fusion of various 
techniques 
allows us to overcome the increased computational challenge occurred when allowing for more terms in  arithmetic progression, thus enabling the proof of 
Theorem~\ref{thm:main}.

\medskip

Indeed, the novelties of this paper include a significant computational efficiency obtained through appealing to
Bilu, Hanrot and Voutier’s Primitive Divisor Theorem and the method of Chabauty, as well as
employing a Thue equation solver earlier on. Section~\ref{sec:compdat} reports the significant computational savings obtained.

\medskip

Before we dive into the details, we give an overview of the structure of the proof. In Section \ref{sec:prime23}, we consider the small prime exponents $2$ and $3$, and prove the infinitude of solutions. 
Subsequently, we may assume that the exponent is greater than or equal to $5$.  In Section \ref{sec:descandbound}, we apply a descent argument to equation~\eqref{eq:mainthm}, resulting in $12$ distinct ternary equations that require resolving.
Thus, in order to prove Theorem ~\eqref{thm:main}, we need to show that any solution arising from any descent case corresponds to a trivial solution to ~\eqref{eq:mainthm} (i.e. $xy=0$).

\medskip

We then apply a theorem of Mignotte \cite{Mignotte}, \cite[Chapter 12, page 423]{Cohen} which is based on the theory of linear forms in logarithms to bound the exponent $p$.
This is an integral step since it reduces the proof of Theorem~\ref{thm:main} to the resolution of a finite number of equations in one less variable. 
However, we  end up obtaining roughly twenty billion equations to solve, in the unknown variables $x$ and $y$, which makes the implementation non-effective. 

\medskip

In Section \ref{sec:bhvbound}, we significantly reduce the number of equations that need to be resolved using the aforementioned work of the fourth author \cite{VP21} (which builds upon the Primitive Divisor Theorem \cite{BHV}, see Table \ref{tab:countingeqs}). 
To ease the computational burden, in Section \ref{sec:prime57} we 
treat the prime exponents $5$ and $7$ separately where possible. We employ the method of Chabauty (see \cite{mccallum2012method}, \cite{Siksek2015Chabauty} and \cite{stoll2001implementing}), in combination with the computationally efficient test presented in \cite{VP21}, as well as making use of \texttt{MAGMA's} \cite{magma} inbuilt Thue solver, which is based on an algorithm of Bilu and Hanrot \cite{BILU1996373}, and Tzanakis and de Weger \cite{TW89}.

\medskip

To further our quest for elimination,  we then apply the ``empty set" criteria which is based on work of Sophie Germain
(see \cite[Section 1.3]{CCIKPUSurvey} or \cite[pp. 358-359]{centina} for the original statement). 
We try to eliminate the remaining equations by performing a further descent over number fields and applying local solubility tests, all of which are detailed in Section~\ref{sec: milliontosolve}.
The implementation of these tests in \texttt{MAGMA} \cite{magma} will point us towards 
 descent equations with specified prime exponent $p$  
 with potential non-trivial solutions. We solve the last remaining equations with \texttt{MAGMA}'s 
 inbuilt \texttt{Thue} solver. 
 We refer the keen and interested reader to \cite{CCIKPUSurvey} for a comprehensive overview of existing results in this area as well as an overview of contemporary and classical techniques used in the resolution of Diophantine equations.

\medskip

In contrast to previous works, we also carry out studies for the exponent $p=2$ and $p=3$. We prove the following theorem and are able to provide an explicit construction for solutions.
\begin{theorem} \label{thm:main2}
Let $p =2$ or $p=3$.  The equation
\begin{equation*} 
(x-4r)^3 + (x-3r)^3 + (x-2r)^3+(x-r)^3 + x^3 + (x+r)^3+(x+2r)^3 + (x+3r)^3 + (x+4r)^3 = y^p \end{equation*}
with $x, r, y \in \Z$, $r>0$ and $\gcd(x, r) = 1$ has infinitely many non-trivial solutions.
\end{theorem}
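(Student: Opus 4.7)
My approach is to first collapse the left-hand side via symmetry and then treat the two exponents separately. Expanding $(x+kr)^3$ for $k=-4,\ldots,4$, the odd-in-$k$ contributions cancel on summation, leaving
\[
\sum_{k=-4}^{4}(x+kr)^3 \;=\; 9 x^3 + 3x r^2 \sum_{k=-4}^{4} k^2 \;=\; 9x^3 + 180\,x r^2 \;=\; 9x(x^2+20 r^2).
\]
Thus it suffices to produce infinitely many integer triples $(x, r, y)$ with $r > 0$, $\gcd(x, r) = 1$ and $xy \neq 0$ satisfying $9x(x^2+20r^2) = y^p$ for each $p \in \{2, 3\}$.

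For $p = 2$, since $9 \mid y^2$ forces $3 \mid y$, I would substitute $y = 3z$ to reduce to $z^2 = x^3 + 20 r^2 x$, an elliptic curve $E_r$ for each fixed $r$. Specialising along the slice $x = 4$ gives $z^2 = 16(4 + 5 r^2)$, and writing $z = 4w$ yields the Pell-type equation
\[
w^2 - 5 r^2 = 4.
\]
Using the identity $L_m^2 - 5 F_m^2 = 4 (-1)^m$ for the Lucas and Fibonacci numbers, the complete set of positive integer solutions is $(w, r) = (L_{2n}, F_{2n})$ for $n \geq 1$. Since $F_m$ is even precisely when $3 \mid m$, infinitely many of the $F_{2n}$ are odd (namely those with $n \not\equiv 0 \pmod 3$), so $\gcd(4, F_{2n}) = 1$, and we obtain the required infinite family $(x, r, y) = (4,\, F_{2n},\, 12 L_{2n})$.

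For $p = 3$, the equation $y^3 = 9x^3 + 180 x r^2$ is homogeneous of degree $3$ in $(x, r, y)$. Setting $X = x/r$, $Y = y/r$ converts solutions into $\Q$-rational points of the smooth projective cubic
\[
E \colon Y^3 = 9 X^3 + 180 X,
\]
which has a $\Q$-rational flex at $(0,0)$ and is therefore an elliptic curve over $\Q$. I would exhibit the point $P_0 = (1/2, 9/2) \in E(\Q)$, coming from the non-trivial solution $(x, r, y) = (1, 2, 9)$ (indeed $9 \cdot 1 \cdot (1 + 80) = 729 = 9^3$), and compute $2 P_0 = (25600/4933,\, 64080/4933)$ via the chord-and-tangent law. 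Given the growth of denominators and Mazur's bound on $E(\Q)_{\mathrm{tors}}$, the point $P_0$ must have infinite order, so $E(\Q)$ contains infinitely many multiples $n P_0 = (a_n/c_n,\, b_n/c_n)$ in lowest terms; each produces an integer solution $(x, r, y) = (a_n, c_n, b_n)$ with $\gcd(x, r) = 1$, and distinct $n$ give distinct triples.

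The step I expect to be the main obstacle is verifying that $P_0$ has infinite order on $E$. One route is to transform $E$ into short Weierstrass form using the rational flex and then apply Nagell--Lutz or reduction modulo primes of good reduction to preclude $P_0 \in E(\Q)_{\mathrm{tors}}$; alternatively, since Mazur's theorem bounds torsion orders by $12$, it suffices to verify directly that $n P_0 \neq (0, 0)$ for every $2 \leq n \leq 12$, which is a finite computation via iterated chord-and-tangent. The remaining book-keeping (that the denominator of $Y_n$ equals that of $X_n$, so $b_n$ is integral, and that the map $n P_0 \mapsto (a_n, c_n, b_n)$ is injective) follows from the integrality of the defining equation of $E$.
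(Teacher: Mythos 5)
Your proposal is correct, and for $p=2$ it takes a genuinely different route from the paper. The paper works on the same curve $E_r: Y^2 = X^3 + 20r^2X$ but produces non-trivial solutions from an explicit two-parameter polynomial identity (taking $r = 2AB(A^2+5B^2)$ and writing down an integral point in terms of $A$ and $B$), and it needs a separate computation of $E_r(\Q)_{\text{tor}} = \{(0,0),\infty\}$ to certify that these points are non-torsion, hence non-trivial. Your specialisation $x=4$ instead reduces the problem to the Pell-type equation $w^2 - 5r^2 = 4$, solved by $(L_{2n}, F_{2n})$, giving the family $(x,r,y) = (4, F_{2n}, 12L_{2n})$ with $F_{2n}$ odd for $3 \nmid n$; this is more elementary, and non-triviality and the coprimality condition are immediate, so no torsion analysis is needed (note you do not even need completeness of the Lucas--Fibonacci parametrisation, only that it supplies infinitely many solutions). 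For $p=3$ your route is essentially the paper's: both treat $9x(x^2+20r^2)=y^3$ as a smooth plane cubic with a rational flex and use homogeneity to lift a non-torsion rational point to infinitely many primitive integer triples; the paper passes to the $r$-independent Weierstrass model $Y^2 = X^3 - 1125$, whose Mordell--Weil group is $\Z$ generated by $(45,300)$ (corresponding to $(x,y,r)=(1,9,-2)$, i.e.\ your $(x,r,y)=(1,2,9)$ up to the sign of $r$), whereas you stay on the cubic $Y^3 = 9X^3 + 180X$. The one step you leave unexecuted --- that $P_0 = (1/2, 9/2)$ has infinite order --- is genuinely needed but routine: your proposed finite check $nP_0 \neq O$ for $n \le 12$ (valid by Mazur) closes it, and on the Weierstrass model Nagell--Lutz rules out the non-integral point at once; your computation of $2P_0 = (25600/4933, 64080/4933)$ is itself already suggestive and checks out. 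Your remaining book-keeping (integrality of $y = c_nY_n$ because its cube is an integer, coprimality from lowest terms, injectivity from distinctness of the points $nP_0$) is sound.
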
 
The proof of Theorem~\ref{thm:main2} can be found in Section~\ref{sec:prime23}.

\section*{Acknowledgements}

This project stemmed from the Women in Numbers Europe 4 workshop, which took place in August 2022 at Utrecht University. The authors are  immensely appreciative towards the organisers: Ramla Abdellatif,
Valentijn Karemaker,
Ariane M\'ezard and Nirvana Coppola for hosting such an inspiring and productive workshop, and for all of their time committed towards such a noble  
endeavour. 

N. Coppola is supported by the NWO Vidi grant No. 639.032.613, New Diophantine Directions.
M. Khawaja is supported by an EPSRC studentship from the University of Sheffield (EPSRC grant no. EP/T517835/1).
\"{O}. \"{U}lkem is supported by T\"{U}BITAK project no. 119F405.

\section{Small prime exponents}
\label{sec:prime23} 

In this section, we treat equation~\eqref{eq:mainthm} for $p \in \{2,3\}$, thus proving the following result.

\begin{theorem}
    Let $p \in \{2,3\}$. 
    The equation
\begin{equation*} 
(x-4r)^3 + (x-3r)^3 + (x-2r)^3+(x-r)^3 + x^3 + (x+r)^3+(x+2r)^3 + (x+3r)^3 + (x+4r)^3 = y^p 
\end{equation*}
with $x, r, y \in \Z$, $\gcd(x, r) = 1$ and $r >0$ has infinitely many integer solutions.
\end{theorem}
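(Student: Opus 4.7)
The plan begins by simplifying the left-hand side using the symmetry of the nine-term progression: since $\sum_{k=-4}^{4} k = \sum_{k=-4}^{4} k^3 = 0$ while $\sum_{k=-4}^{4} k^2 = 60$, the sum of cubes collapses to
\[
  \sum_{k=-4}^{4} (x+kr)^3 \;=\; 9x^3 + 180\,x\,r^2 \;=\; 9x(x^2+20r^2),
\]
so the equation becomes $9x(x^2+20r^2) = y^p$. I would then dispose of the two cases separately.

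For $p=2$, I would specialise to $x=1$. The equation reduces to $9(1+20r^2) = y^2$; since $3 \mid y$, writing $y = 3z$ turns this into the Pell equation $z^2 - 20r^2 = 1$. Its fundamental solution $(z,r)=(9,2)$ generates infinitely many positive integer solutions $(z_n, r_n)$, and each yields a valid triple $(1, r_n, 3z_n)$ with $\gcd(1, r_n) = 1$ automatic.

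For $p=3$, I would view the equation as the smooth projective cubic $\mathcal{C} : Y^3 = 9X(X^2 + 20Z^2)$ in $\P^2_{\Q}$ (smoothness follows from a short check of the partial derivatives). If $(a:b:c) \in \mathcal{C}(\Q)$ is written in lowest terms, then any prime $q$ dividing both $a$ and $c$ would also divide $b^3$ and hence $b$, contradicting primitivity; so $\gcd(a,c)=1$, and each rational point yields a primitive integer triple $(x, r, y) = (a, c, b)$ (adjustable in sign to force $r>0$). Thus it suffices to show that $\mathcal{C}(\Q)$ is infinite. To this end, I would put $\mathcal{C}$ in Weierstrass form: the birational substitution $t = Y/X$, $u = 1/X$ sends $\mathcal{C}$ to $t^3 = 180 u^2 + 9$; the further substitution $T = 180\, t$, $U = 180^2\, u$ yields $U^2 = T^3 - 52\,488\,000$; and the sixth-power factor $6^6$ of the constant can be absorbed via $T = 36 X'$, $U = 216 Y'$ to obtain the Mordell curve $E : Y'^2 = X'^3 - 1125$. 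Chasing the obvious solution $(x, r, y) = (1, 2, 9)$ through this chain lands on the rational point $P = (45, 300) \in E(\Q)$. By Nagell--Lutz, any non-trivial torsion point of $E$ has integral coordinates whose $y$-component squared divides $27 \cdot 1125^2 = 34\,171\,875$, and $300^2 = 90\,000$ fails to divide this number; hence $P$ has infinite order, so $E(\Q)$ is infinite, and pulling the multiples $nP$ back through the birational chain produces infinitely many distinct primitive integer triples $(x, r, y)$. The main obstacle is precisely this certification of infinite order, which Nagell--Lutz handles cleanly; the remainder is explicit bookkeeping through the change of variables.
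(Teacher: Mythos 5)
Your proof is correct, but it splits into one part that genuinely differs from the paper and one that essentially coincides with it. For $p=3$ you and the paper follow the same road: both pass from $9x(x^2+20r^2)=y^3$ to the Mordell curve $Y'^2=X'^3-1125$ and land on the point $(45,300)$; the paper simply cites a \texttt{MAGMA} computation that $E(\Q)\cong\Z$ with this generator, whereas your Nagell--Lutz parity check ($300^2=90000$ is even while $27\cdot 1125^2$ is odd, so $300^2$ cannot divide it) certifies infinite order by hand, which is more self-contained. (Minor slip: your substitution should read $u=Z/X=r/x$ rather than $u=1/X$, as your own chase of $(1,2,9)$ to $(45,300)$ confirms.) For $p=2$ your route is genuinely different: the paper keeps $r$ as a parameter, studies the family $E_r:Y^2=X^3+20r^2X$, pins down $E_r(\Q)_{\mathrm{tor}}=\{(0,0),\infty\}$ via Mazur's theorem and division polynomials, and exhibits explicit parametric families of non-torsion integral points (e.g.\ $r=2AB(A^2+5B^2)$, $x=A^2-5B^2$), thereby also proving positive rank for those $E_r$. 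You instead freeze $x=1$ and reduce to the Pell equation $z^2-20r^2=1$ with fundamental solution $(9,2)$, which is more elementary and instantly yields infinitely many admissible triples $(1,r_n,3z_n)$ with $\gcd(1,r_n)=1$ and $r_n>0$; the trade-off is that your solutions all lie on the single fibre $x=1$ and give no information about the rank of the curves $E_r$, which the paper exploits in its subsequent corollaries. Both arguments are complete proofs of the stated theorem.
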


\subsection*{The exponent $2$.} 
If $p=2$ in equation~\eqref{eq:mainthm}, we obtain an elliptic curve, namely $9x(x^2~+~20r^2) = y^2$. The change of variables $y = 3Y$ and  $x=X$ gives rise to the  following integral Weierstrass model:
\begin{equation} \label{eq:ellcvp2}
    E_r : Y^2 = X^3 + 20 r^2 X.
\end{equation} 

The curious reader can find a detailed overview of the elliptic curves theory used in this section in \cite{silverman2009arithmetic}.

\medskip

Integral points on $E_r$ correspond to integral solutions to equation~\eqref{eq:mainthm} when $p=2$.
For a fixed value of $r$, Siegel's Theorem \cite{Siegel} tells us that $E_r$ has finitely many integer points and with the help of computer software, we may be able to determine all integral points on $E_r$ for specified positive integer $r$. As we vary $r$, we obtain infinitely many integral points, hence infinitely many integral solutions to equation~\eqref{eq:mainthm}.

\medskip 
We demonstrate this  explicitly by constructing parametric families of solutions to equation~\eqref{eq:mainthm} when $p=2$.
We first ascertain that integral solutions to
equation~\eqref{eq:mainthm} do not arise from torsion points on $E_r$. 

\begin{theorem} \label{thm:torsion}
Let $r$ be a non-zero integer. 
    The curve $ E_r: Y^2 = X^3 + 20 r^2 X$ 
    has torsion subgroup $E_r(\Q)_{\text{tor}} = \{(0,0), \infty\}$.
\end{theorem}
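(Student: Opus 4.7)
The plan is to show that $E_r(\Q)_{\text{tors}}$ has order $2$ by pinning down the 2-torsion, ruling out rational points of orders $3$, $4$ and $5$, and then invoking Mazur's classification of torsion subgroups of elliptic curves over $\Q$. The 2-torsion is immediate: since $X^3 + 20r^2 X = X(X^2 + 20r^2)$ and $-20r^2 < 0$, the only rational root is $X = 0$, so $E_r[2](\Q) = \{\infty, (0,0)\}$. To exclude points of order $4$ I would apply the duplication formula, which for $Y^2 = X^3 + aX$ reduces to
\[
X(2P) \;=\; \frac{(x^2 - a)^2}{4\,x\,(x^2 + a)}.
\]
A rational point $P$ of order exactly $4$ must double to the unique nontrivial 2-torsion point $(0,0)$, forcing $x^2 = 20r^2$ and hence $x = \pm 2r\sqrt{5}$, which is irrational for any nonzero $r$.

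For odd-order torsion I would first handle $3$-torsion via the 3-division polynomial. With $a = 20r^2$ one has $\psi_3(X) = 3X^4 + 120r^2 X^2 - 400r^4$, and the substitution $u = X^2$ produces a quadratic whose discriminant is $19200\,r^4 = 3\,(80r^2)^2$; its square root involves $\sqrt{3}$, so $u$ (and hence any rational $X$-coordinate) is irrational, and $\psi_3$ has no rational zero. For $5$-torsion I would bypass the unwieldy $\psi_5$ and instead exploit the fact that $E_r$ has $j$-invariant $1728$, hence complex multiplication by $\Z[i]$. For any prime $p \equiv 3 \pmod 4$ with $p \nmid 10r$, $p$ is inert in $\Z[i]$, so the reduction $\widetilde{E_r}/\F_p$ is supersingular, giving $|\widetilde{E_r}(\F_p)| = p+1$. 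By Dirichlet's theorem on primes in arithmetic progressions, I may additionally impose $p \not\equiv 4 \pmod 5$, so $5 \nmid p+1$. Since $p$ is an odd prime of good reduction, the reduction map $E_r(\Q)_{\text{tors}} \hookrightarrow \widetilde{E_r}(\F_p)$ is injective, ruling out rational $5$-torsion.

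Combining these four facts with Mazur's theorem, the only option for $E_r(\Q)_{\text{tors}}$ that contains $\Z/2\Z$ while excluding elements of orders $3$, $4$ and $5$ is $\Z/2\Z$ itself, which proves the theorem. The main obstacle I expect to address carefully is the $5$-torsion step: one must verify that the simultaneous constraints $p \equiv 3 \pmod 4$, $p \not\equiv 4 \pmod 5$ and $p \nmid r$ can be met for \emph{every} nonzero $r$ (routine via Dirichlet, but worth spelling out), and that the reduction map is injective on torsion. A CM-free alternative is a two-prime reduction argument using primes $p_1, p_2$ of good reduction whose orders $p_i+1$ have coprime odd parts, trading the single CM prime for a small explicit search.
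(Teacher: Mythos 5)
Your proposal is correct, and its skeleton coincides with the paper's: pin down the rational $2$-torsion from the factorisation $X(X^2+20r^2)$, invoke Mazur's theorem, and eliminate orders $3$, $4$ and $5$; your $3$-torsion step (the $\sqrt{3}$ in the discriminant of $\psi_3$ viewed as a quadratic in $X^2$) is the same computation the paper performs. Where you genuinely diverge is in the last two eliminations. The paper handles orders $4$ and $5$ by analysing the $4$th and $5$th division polynomials ``using \texttt{MAGMA} and arguments similar to above,'' i.e.\ a computer-assisted check, whereas you argue by hand: the duplication formula forces $x^2 = 20r^2$ for a point of exact order $4$ (impossible, as $20$ is not a square), and for order $5$ you exploit the CM by $\Z[i]$, Deuring's criterion that primes $p\equiv 3 \pmod 4$ of good reduction are supersingular with $\#\widetilde{E_r}(\F_p)=p+1$, and a Dirichlet-chosen prime with $5\nmid p+1$ and $p\nmid 10r$, together with injectivity of reduction on torsion at an odd prime of good reduction. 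Your route buys a fully computer-free proof that is visibly uniform in $r$; the paper's route is shorter to state and needs no CM input. The caveats you flag are the right ones and are easily discharged: the class $p\equiv 7\pmod{20}$ meets all three congruence constraints at once for every nonzero $r$, and injectivity of reduction on prime-to-$p$ torsion is standard. If you wish to avoid Deuring entirely, the supersingularity can be replaced by the elementary character-sum identity $\sum_{x\in\F_p}\chi\bigl(x^3+20r^2x\bigr)=0$ for $p\equiv 3\pmod 4$, which gives $\#\widetilde{E_r}(\F_p)=p+1$ directly.
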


\begin{proof}

The polynomial $X^3 + 20 r^2 X$ has two irreducible factors, namely $X$ and $X^2+20r^2$. Since $X^2+20r^2$ has no rational roots for any positive integer $r$, 
$\Z/2\Z \times \Z/2\Z \not\subseteq E(\Q)_{\text{tor}}$ but $\Z/2\Z \subseteq E(\Q)_{\text{tor}}$. Mazur's classification theorem \cite{Mazur77, Mazur78} gives the following possible options for $E(\Q)_{\text{tor}}$:
\[
\Z/2\Z, \quad \Z/4\Z, \quad \Z/6\Z, \quad \Z/8\Z, \quad  \Z/10\Z \quad \text{and} \quad \Z/12\Z.
\]

\medskip

To narrow down the possibilities further, we first  investigate the feasibility of 3-torsion. The 3-torsion points of $E_r$ are given by solutions of the $3$-division polynomial, 
\[
\psi_{3}(X) = 3X^{4}+6\cdot 20 r^{2}X^{2}-20^{2}r^{4};
\]
hence, the $x$-coordinates of the $3$-torsion points satisfy
\[
x^{2} = \frac{-6\cdot 20 r^{2} \pm 4 \cdot 20 r^{2} \sqrt{3}}{6}.
\]
Therefore, $x$ cannot be rational and so we have no $3$-torsion. 
Our list has now reduced to 
\[
\Z/2\Z, \quad \Z/4\Z, \quad \Z/8\Z \quad  \text{and} \quad \Z/10\Z.
\]

\medskip 

To refine the torsion group further, we must analyse 
the  $4$-th and $5$-th division polynomials. Using \texttt{MAGMA} \cite{magma}, and arguments similar to above, we find that there is  no rational $4$ or $5$-torsion. 
%
This leaves 
$$E(\Q)_{\text{tor}} = \Z/2\Z  $$
as our only possibility. In particular, $E(\Q)_{\text{tor}} = \{(0,0), \infty\}$.

\end{proof}

In particular, as a Corollary to Theorem~\ref{thm:torsion}, we note that the torsion points found on $E_r$ do not give rise to a non-trivial integral solution to ~\eqref{eq:mainthm} when $p=2$. However, we can still give an explicit construction of an infinite family of integral solutions to equation~\eqref{eq:mainthm} when $p=2$  using the curves $E_r$.

\begin{proposition}\label{prop:positiverank} 
Let $A$ and $B$ be non-zero integers
and take $r=2AB(A^{2}+5B^{2})$. 
Then, the elliptic curve $E_{r}$ as defined in \ref{eq:ellcvp2} has positive rank.

\end{proposition}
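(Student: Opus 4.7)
My plan is to prove positive rank by explicitly exhibiting a rational point on $E_r$ of infinite order. Since Theorem \ref{thm:torsion} tells us the torsion subgroup is exactly $\{(0,0), \infty\}$, it suffices to produce any rational point on $E_r$ with nonzero $X$-coordinate.

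I would guess the point by first trying a small specialization: for $A = B = 1$ we have $r = 12$, and a short search on $Y^2 = X^3 + 2880 X$ turns up the integer point $(16, 224)$. Noting $16 = (A^2 - 5B^2)^2$ and $224 = (A^2 - 5B^2)\bigl((A^2-5B^2)^2 - 2(A^2+5B^2)^2\bigr)$ in this case suggests the candidate point
\[
P = \Bigl( (A^2-5B^2)^2, \; (A^2-5B^2)\bigl((A^2-5B^2)^2 - 2(A^2+5B^2)^2\bigr) \Bigr).
\]

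To verify $P \in E_r(\mathbb{Q})$, I would set $s = A^2-5B^2$ and $t = A^2+5B^2$, so that $t^2 - s^2 = 20 A^2 B^2$ and hence $20 r^2 = 4 A^2 B^2 \cdot 20 (A^2+5B^2)^2 = 4(t^2-s^2) t^2$. Then a direct expansion gives
\[
X_0^{\,2} + 20 r^2 \;=\; s^4 + 4(t^2-s^2) t^2 \;=\; s^4 - 4 s^2 t^2 + 4 t^4 \;=\; (s^2 - 2t^2)^2,
\]
so that $X_0 \bigl( X_0^2 + 20 r^2 \bigr) = s^2 (s^2 - 2t^2)^2 = Y_0^{\,2}$, confirming that $P$ lies on $E_r$.

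It remains to check that $P$ is not a torsion point. Since $A, B$ are nonzero integers and $5$ is not a square in $\mathbb{Q}$, we have $A^2 - 5B^2 \neq 0$, so $X_0 = (A^2 - 5B^2)^2 \neq 0$. Therefore $P \notin \{(0,0), \infty\} = E_r(\mathbb{Q})_{\text{tor}}$ by Theorem \ref{thm:torsion}, and so $P$ has infinite order. Consequently $E_r$ has positive rank, as claimed. The main obstacle in this argument is locating the right parametric point in the first place, but once discovered (by a small numerical search), the verification is just the identity $s^4 - 4 s^2 t^2 + 4 t^4 = (s^2 - 2t^2)^2$ combined with the torsion computation already carried out in Theorem \ref{thm:torsion}.
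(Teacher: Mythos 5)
Your proposal is correct and takes essentially the same approach as the paper: you exhibit the explicit point with $X=(A^{2}-5B^{2})^{2}$ on $E_{r}$ and conclude via Theorem~\ref{thm:torsion} that it has infinite order. Your $Y$-coordinate, $(A^{2}-5B^{2})\bigl((A^{2}-5B^{2})^{2}-2(A^{2}+5B^{2})^{2}\bigr)$, is just the negative of the paper's $(A^{2}-5B^{2})\bigl((A^{2}+5B^{2})^{2}+20A^{2}B^{2}\bigr)$, so it is the same point up to inversion on the curve.
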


\begin{proof}

Let $A, B 
\in \Z \setminus \{0\}$. 
 Define 
 \[
X=(A^{2}-5B^{2})^2, \qquad
Y= (A^{2}-5B^{2}) ( (A^{2}+5B^{2})^2 + 20A^2B^2),
\]
which gives an integral point $(X,Y)$ on $E_r$.
Thus, given non-zero integers $A$ and $B$, and setting  $r=2AB(A^{2}+5B^{2})\in \Z\setminus \{0\}$, 
we have explicitly 
constructed 
 an integer point $(X,Y)$ different from $(0,0)$ on  $E_{r}$. 
Given Theorem~\ref{thm:torsion}, this must be a point of infinite order on $E_{r}$, and thus $E_{r}$ has strictly positive rank. 
\end{proof}


The following
corollary addresses our question of integral solutions to equation~\eqref{eq:mainthm} for 
$p=2$.

\begin{corollary}\label{cor:exp2}
Let $A, B$ be non-zero positive integers. 
Take $r=2AB(A^{2}+5B^{2})$.
Then equation~\eqref{eq:mainthm} has a non-trivial integral solution $(x,y)$ when $p=2$.
\end{corollary}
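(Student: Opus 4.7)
The strategy is to transport the integer point on $E_r$ constructed in Proposition~\ref{prop:positiverank} back through the change of variables $y=3Y$, $x=X$ used to obtain the Weierstrass model~\eqref{eq:ellcvp2}. Under that substitution, an integer point $(X,Y)$ on $E_r$ with $XY\neq 0$ corresponds to an integer pair $(x,y)=(X,3Y)$ satisfying $9x(x^2+20r^2)=y^2$, which is exactly equation~\eqref{eq:mainthm} at $p=2$.

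Setting $r=2AB(A^2+5B^2)$ as in the hypothesis, I would read off the candidate solution
\[
x=(A^2-5B^2)^2, \qquad y=3(A^2-5B^2)\bigl((A^2+5B^2)^2+20A^2B^2\bigr)
\]
directly from Proposition~\ref{prop:positiverank}. By construction this is an integer solution of~\eqref{eq:mainthm} with $p=2$, so the only remaining task is to verify non-triviality, i.e.\ $xy\neq 0$. For $A,B$ non-zero positive integers, the irrationality of $\sqrt{5}$ forces $A^2\neq 5B^2$, so $A^2-5B^2\neq 0$ and hence $x>0$. The factor $(A^2+5B^2)^2+20A^2B^2$ appearing in $y$ is a sum of strictly positive integers and therefore strictly positive; combined with $A^2-5B^2\neq 0$, this gives $y\neq 0$.

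There is no genuine obstacle beyond what was already resolved in Proposition~\ref{prop:positiverank}; the corollary is essentially a packaging step that unpacks the point $(X,Y)$ in the variables of the original equation and rules out the degenerate cases $x=0$ and $y=0$. The only minor point to keep in mind is that the paper's convention of ``non-trivial'' refers to $xy\neq 0$ (compare Theorem~\ref{thm:main}), so no additional check on $\gcd(x,r)$ is required at this stage; that coprimality condition, when needed for the infinitude claim in Theorem~\ref{thm:main2}, can be arranged by specialising $(A,B)$ appropriately.
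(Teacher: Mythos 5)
Your proposal is correct and follows the same route as the paper: pull back the integer point of Proposition~\ref{prop:positiverank} via $(x,y)=(X,3Y)$ and check $xy\neq 0$ using $A^2\neq 5B^2$. In fact your value $x=(A^2-5B^2)^2$ is the accurate one (it is the $X$-coordinate from the proposition), whereas the paper's printed $x=A^2-5B^2$ appears to have dropped the square.
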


\begin{proof}
Proposition~\ref{prop:positiverank} gives an explicit (non-torsion hence non-trivial) integer point on the  curve $E_r$. 
If we take  
 \[
x=A^{2}-5B^{2}, \qquad
y= 3(A^{2}-5B^{2}) ( (A^{2}+5B^{2})^2 + 20A^2B^2),
\]
we obtain a non-trivial integral solution to equation~\eqref{eq:mainthm}.
\end{proof}

For completeness, we 
 note that we can pull up an infinite parametric family of solutions to equation~\eqref{eq:mainthm} when $p=2$ via a more elementary approach. For $M$ and $N$ positive integers, we let 
 \begin{equation*}
    x = 20(MN)^2,\qquad r = M^4-5N^4,\qquad y = 60MN(M^4+5N^4),
\end{equation*}
to obtain non-trivial integral solutions to
equation~\eqref{eq:mainthm} for $p=2$.

\medskip
Moreover, note that this parametric family of solutions then gives rise to integer points on $E_r$ for $r = M^4 - 5N^4$ where $M$ and $N$ are non-zero integers. Again, via Theorem~\ref{thm:torsion}, we may deduce that any member of the family of elliptic curves $E_r$ for $r=M^4 - 5N^4$ have positive rank.

\begin{corollary}\label{cor:exp2pt2}
Let $M$ and $N$ be strictly positive integers. Let $r= M^4 - 5N^4$. Take  
 \begin{equation*}
    X = 20(MN)^2,\qquad  Y = 20MN(M^4+5N^4).
\end{equation*}
Then $(X,Y)$ is an integral point on $E_r$.
\end{corollary}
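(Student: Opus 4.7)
The plan is to prove Corollary~\ref{cor:exp2pt2} by direct substitution into the defining equation of $E_r$, or equivalently by invoking the elementary parametric family of solutions to equation~\eqref{eq:mainthm} exhibited just before the statement. Either route is essentially a one-line algebraic check, so no serious obstacle is anticipated.

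For the substitution route, I would first factor $X^3 + 20 r^2 X = X(X^2 + 20 r^2)$, then plug in $X = 20(MN)^2$ and $r = M^4 - 5N^4$. After expanding, the quantity $X^2 + 20 r^2 = 400\, M^4 N^4 + 20(M^4 - 5N^4)^2$ simplifies via the identity
\[
20\, M^4 N^4 + (M^4 - 5N^4)^2 = (M^4 + 5N^4)^2
\]
to the clean form $20(M^4 + 5N^4)^2$. Multiplying by $X = 20(MN)^2$ then produces $\bigl(20\, MN(M^4 + 5N^4)\bigr)^2 = Y^2$, as required, and integrality of $(X,Y)$ is obvious from the definitions.

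Alternatively, the corollary is essentially a restatement of the elementary family $(x,r,y) = \bigl(20(MN)^2,\; M^4 - 5N^4,\; 60\, MN(M^4+5N^4)\bigr)$ for equation~\eqref{eq:mainthm} at $p=2$, transported through the change of variables $Y = y/3$, $X = x$ that defines the Weierstrass model~\eqref{eq:ellcvp2}. Dividing $y$ by $3$ yields $Y = 20\, MN(M^4+5N^4)$, and the statement of the corollary is immediate. In either approach the only substantive ingredient is the classical ``difference of squares'' identity above, which is exactly what underlies the existence of the family in the first place; this also explains why one would independently rediscover it by searching for rational points on $E_r$ whose $X$-coordinate is a perfect square times $20$.
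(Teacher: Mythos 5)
Your proposal is correct and matches the paper's (implicit) justification: the corollary is stated there without a separate proof, being exactly the transport of the elementary family $x = 20(MN)^2$, $r = M^4-5N^4$, $y = 60MN(M^4+5N^4)$ through the change of variables $X = x$, $Y = y/3$ defining $E_r$, which is your second route. Your direct substitution check, resting on the identity $20M^4N^4 + (M^4-5N^4)^2 = (M^4+5N^4)^2$, is also correct and makes the verification explicit.
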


\medskip

We immediately notice that the two parametric families of solutions  constructed have zero intersection. Suppose $P=(x_{P}, y_{P})\in E_{r}(\Q)$. For the family of points constructed in Corollary~\ref{cor:exp2}, $x_{P}$ is a square in $\Q$. Conversely, $x_{P}$ is never a square in $\Q$ for the family of points constructed in Corollary~\ref{cor:exp2pt2}.

\subsection*{The exponent $3$.} 
We now turn to the exponent $p=3$ case, which also yields a genus $1$ curve,  namely, 
\[
C: 9x(x^2 + 20r^2)=y^3.
\]

As this is a plane cubic with one rational point, it is an elliptic curve. More precisely, let 
$P=(x,y,r)=(0,0,1) \in C(\mathbb{Q})$. Then, using \texttt{Magma} \cite{magma}, we find that $C$ is isomorphic to an elliptic curve $E$ via the isomorphism
\begin{align*}
    \phi & : C \rightarrow E,\\
    \phi&(x,y,r)=(5y/x,-150r/x),
\end{align*}
where $E$ is the elliptic curve with Weierstrass equation
\[
E: Y^2 = X^3 - 1125.
\]
Observe that the curve $E$ is independent of $r$, and $E(\Q) \cong \Z$ with Mordell--Weil basis given by $(45, 300)$, which corresponds to $(x,y,r)=(1,9,-2)$. We deduce that  $C$ has infinitely many \emph{integer} points $(x,y,r)$.

\section{The first descent and initial exponent bound} 
 \label{sec:descandbound}

Since we have considered prime exponents 2 and 3 in Section~\ref{sec:prime23}, we now proceed under the assumption that $p\geq 5$. 

\medskip 

We rewrite equation~\eqref{eq:mainthm} as $9x(x^2+20r^2)= y^p$. Since $3 \mid y$, we make the substitution $y = 3w$ to obtain
\[
x(x^2+20r^2) = 3^{p-2}w^p.
\]
We note that $\gcd(x, x^2 + 20r^2) \in \{1,2,4,5, 10, 20\}$ depending on whether $2,4,5,10$ or $20$ divides $x$ or not. 
Therefore, we consider twelve cases and apply a simple descent argument in each case. 

\medskip

For eight of the cases, we bound the exponent $p$ by applying the following theorem of Mignotte \cite{Mignotte},\cite[Chapter 12, page 423]{Cohen} (based on the method of linear forms in logarithms).
The bounds obtained are recorded in Table~\ref{tab:descent}, along with the descent information.

\begin{theorem}[Mignotte]
    \label{thm:Mignotte}
Assume that the exponential Diophantine inequality 
\[
|ax^n - by^n | \leq c, \quad \text{ with } a,b,c \in \Z_{\ge 0} \text{ and }  a\neq b,
\]
has a solution in strictly positive integers $x$ and $y$ with $\max\{x,y\} > 1$. Let $A = \max \{a,b,3\}$. Then
\[
n \leq \max \left\{ 3 \log\left(1.5| c/b| \right),\; \frac{7400\log A}{\log\left(1+\log A/\mid \log a/b\mid\right)}\right\}.
\]
\end{theorem}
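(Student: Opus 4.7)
The plan is to prove this bound via Baker's method, using an explicit lower bound for a linear form in two logarithms of algebraic numbers. Starting from the hypothesis $|ax^n - by^n| \leq c$, dividing through by $by^n$ yields
\[
\left| \frac{a}{b}\left(\frac{x}{y}\right)^{\!n} - 1 \right| \;\leq\; \frac{c}{b y^n}.
\]
If the right-hand side is not small, say $c/(by^n) \geq 1/2$, then $y^n \leq 2c/b$ and an elementary estimate (treating $y=1$ separately via the symmetric inequality on $x$, which is allowed by $\max\{x,y\}>1$) delivers the first branch of the maximum, namely $n \leq 3 \log(1.5\, c/b)$. Otherwise, setting $\Lambda = n \log(x/y) + \log(a/b)$ with an appropriate sign convention, the elementary estimate $|\log(1+u)| \leq 2|u|$ valid for $|u| \leq 1/2$ converts the display above into the upper bound $|\Lambda| \leq 2c/(by^n)$.

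In this second regime I would invoke an explicit lower bound for a linear form in two logarithms of algebraic numbers, for example Laurent's refinement of the Baker--W\"ustholz theorem, on which Mignotte's numerical constants are built. Viewing $\Lambda$ as a $\Z$-linear combination with integer coefficients $n$ and $1$ of the logarithms $\log(x/y)$ and $\log(a/b)$, and using that $\Lambda \neq 0$ (the equality $ax^n = by^n$ with $a \neq b$ reduces to a small-$n$ case absorbed by the first branch), such a bound takes the shape
\[
\log|\Lambda| \;\geq\; -C_1 (\log A)\bigl(\log n + C_2 \log \log A + C_3\bigr),
\]
with explicit absolute constants $C_i$ and $A = \max\{a,b,3\}$. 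Combining this lower bound with the upper bound $|\Lambda| \leq 2c/(by^n)$ produces a transcendental inequality in $n$; after optimizing the auxiliary parameters in Laurent's bound against the quantity $|\log(a/b)|$ --- whose role surfaces as the denominator $\log(1 + \log A / |\log(a/b)|)$ in the final answer --- one extracts the second branch $n \leq 7400 \log A / \log(1 + \log A / |\log(a/b)|)$.

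The principal obstacle is not conceptual but the extraction of the precise numerical constant $7400$. This requires a careful run through Laurent's explicit bound, which rests on Pad\'e approximants to the logarithm and a delicate optimization of the auxiliary parameters $K$, $L$, $R_i$, $S_i$ entering that bound, balanced against the elementary upper bound on $|\Lambda|$. The degenerate cases $y = 1$, $x = y$, and $\Lambda = 0$ each require a brief separate check, but each is absorbed into the first branch of the maximum or excluded by the hypothesis $a \neq b$. The references \cite{Mignotte} and \cite[Chapter 12]{Cohen} carry out precisely this numerical optimization and yield the stated constant $7400$.
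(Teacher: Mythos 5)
The paper does not prove this statement at all: it is imported verbatim as a known theorem of Mignotte, with the proof delegated to \cite{Mignotte} and to \cite[Chapter 12, page 423]{Cohen}. So there is no in-paper argument to compare yours against, and the only meaningful question is whether your outline would constitute a proof on its own.

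Your two-regime decomposition is the right one and matches how such bounds are actually established: when $c/(by^n)$ is not small one gets the elementary first branch from $y^n \leq 2c/b$ (with the cases $y=1$, $x=y$, and $\Lambda=0$ handled as you indicate), and otherwise one passes to the linear form $\Lambda = n\log(x/y) + \log(a/b)$, bounds it above by $2c/(by^n)$ via $|\log(1+u)|\leq 2|u|$, and plays this against an explicit lower bound for a linear form in two logarithms. You also correctly identify where the unusual denominator $\log\left(1+\log A/|\log(a/b)|\right)$ comes from, namely the optimization of the auxiliary parameters against the size of $|\log(a/b)|$. However, as a proof the proposal has a genuine gap: the entire quantitative content of the theorem --- the constant $7400$ and the precise functional form of the second branch --- is not derived but referred back to \cite{Mignotte} and \cite{Cohen}, i.e.\ to the very sources the theorem is being quoted from. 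What you have written is an accurate roadmap of the argument, not a proof of the stated inequality; to close the gap you would need to state the specific explicit lower bound being invoked (with its constants) and actually carry out the parameter optimization that produces $7400$. Since the paper itself treats this as a black-box citation, your proposal is no weaker than what the paper does, but it should be presented as such rather than as a proof.
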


\begin{table}[htbp!]
\begin{adjustbox}{width =\textwidth}
\begin{tabular}{|c|c|c|c|c|}
\hline
{\bf Case} & {\bf Conditions on $x$} & {\bf Descent equations} & {\bf Ternary equation} & {\bf  Upper Bound for $p$ } \\
\hline\hline
$1$ &  $3 \mid x$ and $20 \nmid x$ &
\begin{tabular}{@{}c@{}}$x=3^{p-2}\cdot w_1^p$ \\ $x^2+20r^2=w_2^p$ \end{tabular} & 
$w_2^{p} - 3^{2p-4} \cdot w_1^{2p}=20r^2$  
& 46914 \\

\hline

$2$ & $3 \mid x$ and $5\mid x$ and $2\nmid x$ &
\begin{tabular}{@{}c@{}}$x=3^{p-2}\cdot 5^{p-1}\cdot w_1^p$ \\ $x^2+20r^2=5w_2^p$ \end{tabular} & 
$w_2^{p}-3^{2p-4}\cdot 5^{2p-3}\cdot w_{1}^{2p}=4r^2$
& 98461 \\

\hline

$3$ & $3 \mid x$ and $4\mid x$ and $5\nmid x$ &
\begin{tabular}{@{}c@{}}$x=2^{p-2}\cdot 3^{p-2}\cdot w_1^p$ \\ $x^2+20r^2=2^{2}\cdot w_{2}^{p}$ \end{tabular} & 
$w_2^{p}-2^{2p-6}\cdot 3^{2p-4}\cdot w_{1}^{2p}=5r^2$
& 91314 \\

\hline

$4$ & $3 \mid x$ and $4\mid x$ and $5\mid x$ &
\begin{tabular}{@{}c@{}}$x=2^{p-2}\cdot 3^{p-2}\cdot 5^{p-1}\cdot w_1^p$ \\ $x^2+20r^2=2^{2}\cdot 5\cdot w_{2}^{p}$ \end{tabular} & 
$w_2^{p}-2^{2p-6}\cdot 3^{2p-4}\cdot 5^{2p-3}\cdot w_{1}^{2p}=r^2$
& 142861 \\

\hline

$5$ & $3 \nmid x$ and $2 \nmid x$ and $5 \mid x$  & \begin{tabular}{@{}c@{}}$x=5^{p-1} w_1^p$ \\ $x^2+20r^2= 3^{p-2}\cdot5w_2^p$ \end{tabular} & $3^{p-2}\cdot w_2^p-5^{2p-3}w_1^{2p}=4r^2$ 
&  34286 \\

\hline

$6$ & $3 \nmid x$ and $4 \mid x$ and $5 \mid x$  & \begin{tabular}{@{}c@{}}$x=2^{p-2}5^{p-1} w_1^p$ \\ $x^2+20r^2=2^2 3^{p-2} 5w_2^p$ \end{tabular} & $ 3^{p-2}w_2^p-2^{2p-6}5^{2p-3}w_1^{2p}=r^2$ 
&  $78880$ \\

\hline

$7$ & $3 \nmid x$ and $4 \mid x$ and $5 \nmid x$  & \begin{tabular}{@{}c@{}}$x=2^{p-2}w_1^p$ \\ $x^2+20r^2=2^2 3^{p-2}w_2^p$ \end{tabular} & $ 3^{p-2}w_2^p-2^{2p-6}w_1^{2p}=5r^2$ 
&  $27047$ \\

\hline

$8$ & $3 \nmid x$ and $2 \nmid x$ and $5 \nmid x$  & \begin{tabular}{@{}c@{}}$x= w_1^p$ \\ $x^2+20r^2=3^{p-2}w_2^p$ \end{tabular} & $3^{p-2}w_2^p-w_1^{2p}=20r^2$ 
&  $23457$ \\

\hline

$9$ & $3 \mid x$ and $2 \mid x$, $4, 5 \nmid x$ & \begin{tabular}{@{}c@{}}$x=2^{p-1}\cdot 3^{p-2} \cdot w_1^p$ \\ $x^2+20r^2= 2w_{2}^{p}$ \end{tabular}& $w_2^p - 2^{2p-3}3^{2p-4}w_{1}^{2p}=10r^{2}$ & 2\\

\hline

$10$ & $3 \mid x$ and $2, 5 \mid x$, $4 \nmid x$ & \begin{tabular}{@{}c@{}}$x=2^{p-1}\cdot 3^{p-2} \cdot 5^{p-1} \cdot w_1^p$ \\ $x^2+20r^2= 2 \cdot 5 \cdot w_{2}^{p}$ \end{tabular}& $w_2^p - 2^{2p-3}3^{2p-4}5^{2p-3}w_{1}^{2p}=2r^{2}$ &  2\\

\hline

$11$ & $3 \nmid x$ and $2, 5 \mid x$ and $4 \nmid x$  & \begin{tabular}{@{}c@{}}$x=2^{p-1}5^{p-1} w_1^p$ \\ $x^2+20r^2=2\cdot5 \cdot 3^{p-2} w_2^p$ \end{tabular} & $3^{p-2} w_2^p-2^{2p-3}5^{2p-3}w_1^{2p}=2 r^2$ 
& 2 \\

\hline

$12$ & $3 \nmid x$ and $2 \mid x$ and $4, 5 \nmid x$  & \begin{tabular}{@{}c@{}}$x=2^{p-1} w_1^p$ \\ $x^2+20r^2=2 \cdot 3^{p-2} w_2^p$ \end{tabular} & $3^{p-2} w_2^p-2^{2p-3}w_1^{2p}=2 r^2$ 
&  2 \\
\hline

\end{tabular}
\end{adjustbox}
\caption{Descent cases.}
\label{tab:descent}
\end{table}

Finally, we address descent cases 9--12 in Table~\ref{tab:descent}. 
For example, let us consider descent Case 9.
 If $p\geq 3$, then 4 divides $x$.
This contradicts our assumption that 4 does not divide $x$.
Thus $p=2$.
Similar arguments show that $p$ is bounded by $2$ for descent cases 10--12.

\section{An incredibly efficient sieve} 
 \label{sec:bhvbound}

For descent cases 1--4 in Table~\ref{tab:descent}, we make drastic computational improvements by very quickly discarding infeasible values of the prime exponent $p$
that cannot produce solutions to equation~\ref{eq:mainthm}. We primarily achieve this via an application of prior work of the fourth author 
\cite[Theorem 1]{VP21}, which is based on the Primitive Divisor Theorem due to Bilu, Hanrot and Voutier \cite{BHV}.

\begin{theorem}[Patel] \label{thm:Patel} Let $C_{1} \geq 1$ be a square free integer and $C_{2}$ a positive integer. Assume that $C_{1}C_{2} \not\equiv 7 \pmod{8}$. Let $p$ be a prime for which
 \begin{equation*}
     C_{1}x^{2}+C_{2}=y^{p}, \qquad \gcd(C_{1}x^{2}, C_{2}, y^{p})=1 
 \end{equation*}
has a solution $(x, y) \in \Z_{>0}$. Write $C_{1}C_{2} = cd^{2}$, where $c$ is squarefree.
Then one of the following holds:
\begin{itemize}
    \item $p \leq 5$;
    \item $p=7$ and $y=3, 5$ or $9$;
    \item $p$ divides the class number of $\Q(\sqrt{-c})$;
    \item $p$ divides $\left(q-\left(\frac{-c}{q}\right)\right)$, where $q$ is a prime $q \mid d$ and $q \nmid 2c$. 
\end{itemize}
\end{theorem}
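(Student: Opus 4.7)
The approach would be to factor the equation $C_{1}x^{2} + C_{2} = y^{p}$ inside the ring of integers $\OO_K$ of the imaginary quadratic field $K = \Q(\sqrt{-c})$, where $c$ is the squarefree kernel of $C_{1}C_{2}$, and then to invoke Bilu, Hanrot and Voutier's Primitive Divisor Theorem \cite{BHV}. First I would multiply through by $C_{1}$ and set $X = C_{1}x$, turning the equation into $X^{2} + c\,d^{2} = C_{1}\,y^{p}$, which factors in $\OO_K$ as
\[
(X + d\sqrt{-c})(X - d\sqrt{-c}) = C_{1}\,y^{p}.
\]
The coprimality hypothesis $\gcd(C_{1}x^{2},C_{2},y^{p}) = 1$ combined with $C_{1}C_{2} \not\equiv 7 \pmod 8$ (the latter ruling out the awkward situation in which $2$ splits in $\OO_K$ while $2 \mid y$) is exactly what is needed to ensure that the ideals $(X + d\sqrt{-c})$ and $(X - d\sqrt{-c})$ share only prime ideals lying above rational primes dividing $C_{1}$. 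A standard analysis then produces a factorization $(X + d\sqrt{-c}) = \mathfrak{a}\,\mathfrak{b}^{p}$, where $\mathfrak{a}$ is an ideal completely determined by the splitting of $C_{1}$ in $\OO_K$.

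In the case where the third bullet fails, i.e.\ $p \nmid h(K)$, the $p$-th power $\mathfrak{b}^{p}$ being principal forces $\mathfrak{b}$ itself to be principal, so I can write $X + d\sqrt{-c} = \alpha \beta^{p}$ for some $\beta \in \OO_K$ and some $\alpha$ drawn from a finite set determined by $C_{1}$ and the unit group $\OO_K^\times$. Subtracting the Galois conjugate and dividing by $2\sqrt{-c}$ then yields
\[
d \;=\; \frac{\alpha \beta^{p} - \bar\alpha \bar\beta^{p}}{2\sqrt{-c}},
\]
which, after checking that $\alpha\beta + \bar\alpha\bar\beta$ and $\alpha\beta \cdot \bar\alpha\bar\beta$ are coprime nonzero rationals, exhibits $d$ (up to sign) as the Lehmer number $\tilde u_{p}$ attached to the Lehmer pair $(\alpha\beta,\bar\alpha\bar\beta)$.

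Now I would apply BHV: for $p \geq 7$, with only a finite and explicit list of exceptional Lehmer pairs, $\tilde u_{p}$ must admit a primitive prime divisor $q$. Such a $q$ necessarily divides $d$, is coprime to $2c$ by primitivity, and satisfies $p \mid q - \left(\frac{-c}{q}\right)$, the latter being the usual rank-of-apparition congruence for Lehmer sequences; this is precisely the fourth bullet. The small cases $p \in \{2,3,5\}$ are absorbed into the first bullet, while matching our specific parametrization against the BHV tables of defective Lehmer pairs at $p = 7$ should pare the list down to exactly $y \in \{3,5,9\}$, yielding the second bullet. The main obstacle I anticipate is this last table-matching step, coupled with the careful tracking of the unit ambiguity in $\alpha$ over the two exceptional imaginary quadratic fields $\Q(i)$ and $\Q(\sqrt{-3})$ (where $\OO_K^\times$ is strictly larger than $\{\pm 1\}$), and verifying that the primitive divisor $q$ cannot, under the standing assumption $C_{1}C_{2} \not\equiv 7 \pmod 8$, sneak into the set of primes dividing $2c$ that must be excluded.
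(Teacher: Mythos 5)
The paper does not actually prove this statement: it is imported verbatim from \cite{VP21} (Theorem 1 there), so there is no internal proof to compare against. Your outline is essentially the argument of that source — factor $X^2+cd^2=C_1y^p$ over $\Q(\sqrt{-c})$, use the coprimality hypothesis together with $C_1C_2\not\equiv 7\pmod{8}$ (whose precise role is to force $y$ odd, which is what makes the Lehmer pair coprime, rather than to control the splitting of $2$) to obtain an ideal equation $(X+d\sqrt{-c})=\mathfrak{a}\,\mathfrak{b}^p$, descend to elements when $p\nmid h(\Q(\sqrt{-c}))$, and invoke the Bilu--Hanrot--Voutier theorem \cite{BHV}, with the defective Lehmer pairs at $p=7$ accounting for $y\in\{3,5,9\}$ and the rank of apparition of a primitive divisor giving the final bullet. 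One detail needs repair: the quantity $d=(\alpha\beta^{p}-\bar\alpha\bar\beta^{p})/(2\sqrt{-c})$ is \emph{not} the Lehmer number of the pair $(\alpha\beta,\bar\alpha\bar\beta)$, because $\alpha\beta^{p}\neq(\alpha\beta)^{p}$ in general; the pair must be manufactured so that the numerator is an honest difference of $p$-th powers (exploiting $\alpha\bar\alpha=C_1$ and the finitely many choices of $\alpha$), after which $d$ agrees with $\tilde u_p$ only up to a factor supported on non-primitive primes — harmless, but it has to be said. With that adjustment your sketch matches the known proof.
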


 Application of Theorem~\ref{thm:Patel} dramatically reduces the number of equations that need resolving in cases 1--4. This is evident from column 4 of Table~\ref{tab:countingeqs}. Unfortunately, we are unable to apply Theorem~\ref{thm:Patel} to cases 5--8 to obtain similar significant computational savings. We highlight that this is incredibly helpful in particular for case 4, where the bound obtained through appealing to Theorem~\ref{thm:Mignotte} is particularly large ($p \leq 142,861$) and would not be tractable under computations  performed in previous works \cite{Garcia2019, GarciaPatel2019, GarciaPatel2020}.

\begin{table}[htbp!]
\begin{tabular}{|c|c|c|c|}
\hline
Case & \makecell{Upper bound for $p$\\
        from\\
        Theorem~\ref{thm:Mignotte}}
        & \makecell{Approximate number of 
\\eqns to solve in $w_{1}$ and $w_{2}$\\
  after application of Theorem~\ref{thm:Mignotte}}
&
\makecell{Number of eqns
\\ to solve in $w_{1}$ and $w_{2}$ 
\\ after application of 
\\Theorems~\ref{thm:Mignotte} and ~\ref{thm:Patel}}\\
\hline
1 & 46914 & $3.2\times 10^9$ & 1,404,080\\
\hline
2 & 98461 & $5\times 10^9$ & 1,277,832\\
\hline
3 & 91314 & $2.9\times 10^9$ & 725,588\\
\hline
4 & 142861 & $3.5\times 10^9$ & 661,224\\
\hline
5 & 34286 & $1.9\times 10^9$ & $1.9\times 10^9$ \\
\hline
6 & 78880 & $2.1\times 10^9$ &  $2.1\times 10^9$\\
\hline
7 & 27047 & $9.9\times 10^8$ & $9.9\times 10^8$ \\
\hline
8 & 23457 & $1.7\times 10^9$ & $1.7\times 10^9$ \\
\hline\hline
Total & -- & $2.1\times 10^{10}$ & $6.7\times 10^9$\\
\hline
\end{tabular}
    \caption{Counting the number of equations that need to be resolved.}
    \label{tab:countingeqs}
\end{table}

\section{Small prime exponents: revisited} 
\label{sec:prime57}

In this section, we deal with the prime exponents $5$ and $7$.
This enables us to further reduce the computational effort needed in resolving certain descent equations. 
We begin this section by giving a summary of additional methods required. 

\subsection*{The method of Chabauty} 
Let $X/\Q$ be a curve with genus $g$. Suppose $J_{X}$ has rank $r$, where $J_{X}$ is the Jacobian of $X$. Let $p\geq 3$ be a prime of good reduction for $X$.
If the Chabauty condition $r < g$ is satisfied then the method of Chabauty guarantees the existence of a non-empty set of so-called \emph{annihilating differentials}. 
By studying the zeros of these annihilating differentials, one can find a set of $p$-adic points on $X$ that contain $X(\Q)$.
We refer the reader to \cite[Section 1.6]{CCIKPUSurvey} for a brief overview, or to \cite{MullerChabauty} and \cite{Siksek2015Chabauty} for a comprehensive overview of the method of Chabauty. We note if $X$ is a genus 2 hyperelliptic curve then there is a readily available \texttt{MAGMA} \cite{magma} implementation that computes $X(\Q)$ using Chabauty, provided that the Chabauty condition is satisfied.

\subsection*{Thue equations}
For a fixed prime $p$, each descent case yields ternary equations of the form
\begin{equation}
    \label{eq: Fermatpp2}
    aw_{2}^{p}-bw_{1}^{2p}=cr^2
\end{equation}

In these cases, we let $\tau = w_{2}$ and $\sigma=w_{1}^{2}$. 
This transforms ~\eqref{eq: Fermatpp2} to
\begin{equation}
    \label{eq: Thue}
     a\tau^{p}-b\sigma^{p}=cr^{2}.
\end{equation}

For a fixed value of $r$,  equation ~\eqref{eq: Thue} is a Thue equation of degree $p$. For small enough values of $p$, 
we can solve \eqref{eq: Thue} using the Thue solver in \texttt{MAGMA} \cite{magma} which is based on an algorithm of Bilu and Hanrot \cite{BILU1996373}, and Tzanakis and de Weger \cite{TW89}. 

\subsection{The non-existence of solutions for exponent $5$}
Setting $p=5$ and making appropriate substitutions
in \eqref{eq:mainthm} yields a genus 2 hyperelliptic curve $C$. 
By the infamous theorem of Faltings \cite[Theorem 1]{Faltings}, $C$ has finitely many rational points, and hence, integral points.
With the exception of one descent case, we are able to determine $C(\Q)$ using the method of Chabauty. 
Thus we are able to find a resolution to equation~\ref{eq:mainthm} with exponent $p=5$. Key data is recorded in 
Table \ref{tab:Chabauty}.

\medskip

    \begin{table}[htbp!]
\begin{adjustbox}{width =\textwidth}
\begin{tabular}{|c|c|c|c|c|}
\hline
Descent case & Change of variables & $C$ & Rank Bound of $J_C$ & $C(\Q)$\\
\hline
1 & $Y=10r/w_1^5$, $X = w_2/w_1^2$ &$Y^2=5X^5-5\cdot 3^6$ & 1 & $\{(9, \pm 540), \infty\}$\\
\hline
2 & $Y=2r/w_1^5$, $X = w_2/w_1^2$ &$Y^2=X^5-3^{6}\cdot 5^7$ & 0 & $\{\infty\}$\\
\hline 
3 & $Y=5r/w_1^5$, $X = w_2/w_1^2$ &$Y^2=5X^5-5\cdot 2^4\cdot 3^{6}$ & 2 & unverified \\
\hline 
4 & $Y=r/w_1^5$, $X = w_2/w_1^2$  &$Y^2=X^5-2^{4}\cdot 3^{6}\cdot 5^7$ & 0 & $\{\infty\}$\\
\hline
5 & $Y=2r/w_1^5$, $X = w_2/w_1^2$ &$Y^2=3^3\cdot X^{5}-5^{7}\cdot 5$ & 0 & $\{\infty\}$\\
\hline
6 & $Y=r/w_1^5$, $X = w_2/w_1^2$  & $Y^2=3^3\cdot X^5-2^4\cdot 5^7$& 0 & $\{\infty\}$\\
\hline
7 & $Y=5r/w_1^5$, $X = w_2/w_1^2$ & $Y^2=3^{3}\cdot5 X^5- 2^4 \cdot 5$ & 1& $\{\infty\}$\\
\hline
8 & $Y=10r/w_1^5$, $X = w_2/w_1^2$ & $Y^2=3^3\cdot 5X^5 - 5$ & 0 & $\{\infty\}$\\
\hline

\end{tabular}
\end{adjustbox}
\caption{Treating $p=5$ using the method of Chabauty}
\label{tab:Chabauty}

\end{table}

For the convenience of the reader, we expand upon the details in descent cases 1 and 7. Firstly, descent case 1 gives rise to 
the genus 2 hyperelliptic curve $C$ as stated in Table~\ref{tab:Chabauty}.
Using the Chabauty implementation in \texttt{MAGMA} \cite{magma}, we determine all rational points on $C$; these have been recorded in Table~\ref{tab:Chabauty}.
The point at infinity 
$\infty\in C(\Q)$ 
corresponds
to $x=0$, whilst the points $(9,\pm 540)\in C(\Q)$ imply that $3\mid r$, contradicting the coprimality of $x$ and $r$.

\medskip

We now consider descent case 7. 
We deduce that  
the genus 2 hyperelliptic curve obtained, $C$ (stated in Table~\ref{tab:Chabauty}),
has Jacobian with a rank bounded above by 1.
With the help of \texttt{MAGMA} \cite{magma}, we found a point $D\in J(\Q)$ that has infinite order, where 
\[
D=(x^2 - x + 2/3, 15x -10),
\]
given in Mumford representation, 
thus the rank of $J(\Q)$ is 1. 
Hence we are able to apply the method of Chabauty, and using the \texttt{MAGMA} \cite{magma} implementation, we find that $C(\Q)=\{\infty\}$.

\medskip 

Unfortunately, we find that 
descent case 3 is intractable under the method of Chabauty
as we are unable to ascertain the rank of the Jacobian of the genus 2 hyperelliptic curve $C$ (stated in Table~\ref{tab:Chabauty}),
hence 
we cannot verify that the Chabauty condition is satisfied. 
Instead, 
we let $\sigma=w_{2}$ and $\tau=w_{1}^{2}$  to yield the Thue equation
\begin{equation}
    \label{eq: 3Thue5}
        \sigma^{5}-2^{4}\cdot 3^{6}\cdot\tau^{5}=5r^{2}
\end{equation}
for a fixed value of $r$. We use \texttt{MAGMA}'s \cite{magma} Thue solver, and  find that the only solutions to \eqref{eq: 3Thue5}, for $1\leq r\leq 10^6$, are 
$(\sigma, \tau, r) \in \{(5, 0, 5^2), (5^3, 0, 5^{7}), (5\cdot 7^2, 0, 5^{2}\cdot 7^{5})\}$.
Our computations with prime exponent 5 thus prove the following proposition.

\begin{proposition}\label{prop:prime5}
    The equation
\begin{equation*}  
(x-4r)^3 + (x-3r)^3 + (x-2r)^3+(x-r)^3 + x^3 + (x+r)^3+(x+2r)^3 + (x+3r)^3 + (x+4r)^3 = y^5 
\end{equation*}
with $x, r, y \in \Z$, $\gcd(x, r) = 1$ and $0< r <10^6$ has no solutions with $xy \neq 0$.
\end{proposition}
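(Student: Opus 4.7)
The plan is to combine the descent already performed in Section~\ref{sec:descandbound} with a case-by-case analysis of the eight relevant ternary equations, using a mixture of Chabauty's method and a Thue solver. Since cases 9--12 in Table~\ref{tab:descent} force $p = 2$, only cases 1--8 are live when $p = 5$, and in each case the ternary descent equation has the shape $a w_2^5 - b w_1^{10} = c r^2$ for explicit integer constants $a,b,c$.

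First, I would turn each descent equation into a genus-$2$ hyperelliptic curve. Setting $Y = (c'/w_1^5)\, r$ and $X = w_2 / w_1^2$ for the appropriate constant $c'$ recorded in the second column of Table~\ref{tab:Chabauty}, each case yields a curve
\[
C : Y^2 = a X^5 - B,
\]
where $B$ depends on the case. Any nontrivial integer solution $(x,y,r)$ with $\gcd(x,r) = 1$ produces a rational point on the corresponding $C$ (with $w_1 \ne 0$, i.e.\ $X$ finite), so it suffices to show that $C(\Q)$ consists only of points at infinity together with points that force $xy = 0$ or violate $\gcd(x,r) = 1$. Faltings' theorem guarantees that each $C(\Q)$ is finite, but we need an effective determination.

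The main tool is Chabauty's method, via the \texttt{MAGMA} implementation for genus-$2$ hyperelliptic curves. For each curve I would compute (a bound on) the Mordell--Weil rank $r$ of the Jacobian $J_C$ and verify the Chabauty condition $r < 2$. In seven of the eight cases I expect to succeed: in most cases the two-descent bound will show the rank is $0$ (forcing $C(\Q)$ to be supported at $\infty$), and in cases where the rank is $1$ I would exhibit a generator of infinite order in $J_C(\Q)$ (e.g.\ the Mumford-represented divisor $D = (x^2 - x + 2/3,\, 15 x - 10)$ for descent case~$7$) to certify the rank, then run the Chabauty routine. The known rational points $\infty$ and, in descent case~$1$, the pair $(9, \pm 540)$ must then be interpreted back in the original variables: $\infty$ corresponds to $w_1 = 0$, hence $x = 0$, while $(9, \pm 540)$ forces $3 \mid r$, contradicting $\gcd(x,r) = 1$ since $3 \mid x$ in that case.

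The hard part will be descent case~$3$, where the corresponding curve $Y^2 = 5 X^5 - 5 \cdot 2^4 \cdot 3^6$ has a Jacobian whose rank two-descent only bounds by~$2$, so the Chabauty hypothesis cannot be verified. For this case I would pivot to a Thue-equation approach: setting $\sigma = w_2$ and $\tau = w_1^2$ transforms the ternary equation into
\[
\sigma^5 - 2^4 \cdot 3^6 \, \tau^5 = 5 r^2,
\]
which for each fixed $r$ with $1 \le r \le 10^6$ is a degree-$5$ Thue equation. I would call \texttt{MAGMA}'s inbuilt Thue solver (based on Bilu--Hanrot \cite{BILU1996373} and Tzanakis--de Weger \cite{TW89}) in a loop over $r$, collecting all integer solutions $(\sigma, \tau)$. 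I then check that every returned triple has $\tau = 0$ (so $w_1 = 0$, hence $x = 0$ and thus $xy = 0$), matching the expected solutions $(\sigma,\tau,r) \in \{(5,0,5^2),\,(125,0,5^7),\,(5 \cdot 49,0,5^2 \cdot 7^5)\}$. The computational expense of $10^6$ Thue calls is the only potential bottleneck, but since each individual Thue equation is of small degree and has tiny constants (independent of $r$ apart from the right-hand side), this loop is tractable; combined with the Chabauty outputs in the other seven cases, this completes the proof that $xy = 0$ whenever a solution exists.
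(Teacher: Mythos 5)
Your proposal follows essentially the same route as the paper: the same change of variables to genus-$2$ hyperelliptic curves, Chabauty via the \texttt{MAGMA} implementation for seven of the eight descent cases (including the same infinite-order divisor certifying rank $1$ in case 7 and the same interpretation of $\infty$ and $(9,\pm 540)$ in case 1), and the same fallback to a loop of degree-$5$ Thue equations $\sigma^5 - 2^4\cdot 3^6\tau^5 = 5r^2$ for case 3, where the Chabauty condition cannot be verified. The argument is correct and matches the paper's proof in both structure and detail.
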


\subsection{The non-existence of solutions for exponent $7$}
We consider descent cases 1--4. 
An appropriate substitution (see Table \ref{tab:BHVequations}),  transforms the descent equation to an equation of the form
\begin{equation}
    \label{eq:BHVeq}
    C_{1}X^2+C_{2}=w_{2}^{p},
\end{equation}
where $C_{1}, C_{2}\in \Z_{>0}$, $C_{1}$ is squarefree and $C_{1}C_{2}\not\equiv 7$ (mod 8).
Suppose $p=7$.
By Theorem~\ref{thm:Patel}, 
any solution to \eqref{eq:BHVeq} requires $w_{2}\in\{3, 5, 9\}$. 
One can easily verify that for any such $w_{2}$, the corresponding solution $(x, y, r)$ to \eqref{eq:mainthm} would satisfy $\gcd(x, r)>1$.
This contradicts our hypotheses.

    \begin{table}[htbp!]
\begin{adjustbox}{width =\textwidth}
\begin{tabular}{|c|c|c|c|c|}
\hline
\text{Case} & Conditions on $x$ &  \makecell{Descent equation} 
& \makecell{Change of variables} & \makecell{Descent equation \\ after transformation} \\
\hline
1 & $3 \mid x$ and  $20 \nmid x$ & $x^2 + 20r^2 = w_{2}^{p}$ &  $x=X$ & $X^2 + 20r^2 = w_{2}^{p}$ \\
\hline
2 & $3 \mid x$ and  $5\mid x $ and $ 2 \nmid x$ & $x^2 + 20r^2 = 5w_{2}^{p}$
 & $x = 5X$ &   $5X^2 + 4r^2 = w_{2}^{p}$ \\
\hline
3 & $3 \mid x$ and $4\mid x$ and $5 \nmid x$ & $x^2 + 20r^2 = 2^{2}w_{2}^{p}$ & $x = 2X$ &
        $X^2 + 5r^2 = w_{2}^{p}$ \\
\hline
4 & $3 \mid x$ and $4\mid x$ and $5\mid x$ & $x^2 + 20r^2 = 2^{2}\cdot 5\cdot w_{2}^{p}$ &
$x = 10X$ &
        $5X^2 + r^2 = w_{2}^{p}$ \\
\hline
\end{tabular}
\end{adjustbox}
\caption{Change of variables in descent equations.}
\label{tab:BHVequations}
\end{table}

Thus, we have proven the following.
\begin{proposition}\label{prop:prime7}
    The equation
\begin{equation*} 
(x-4r)^3 + (x-3r)^3 + (x-2r)^3+(x-r)^3 + x^3 + (x+r)^3+(x+2r)^3 + (x+3r)^3 + (x+4r)^3 = y^7 
\end{equation*}
with $x, r, y \in \Z$, $\gcd(x, r) = 1$, $r >0$
and with 
$3 \mid x$  has no solutions with $xy \neq 0$.
\end{proposition}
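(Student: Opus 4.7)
The plan is to apply Theorem~\ref{thm:Patel} to the four descent equations of Table~\ref{tab:descent} indexed by $3\mid x$ (namely cases $1$--$4$), specialised to $p=7$. The hypothesis $3\mid x$ is essential here: in cases $5$--$8$ the coefficient $3^{p-2}$ appears on the $w_{2}^{p}$ side of the descent, which prevents a direct application of Theorem~\ref{thm:Patel} in the form $C_{1}X^{2}+C_{2}=w_{2}^{p}$.

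First I would recast each of the four descent equations $x^{2}+20r^{2}=a\cdot w_{2}^{p}$ (with $a\in\{1,5,4,20\}$) in the shape $C_{1}X^{2}+C_{2}=w_{2}^{p}$ required by Theorem~\ref{thm:Patel}, via the substitutions $x=X,\,5X,\,2X,\,10X$ for cases $1$--$4$ respectively (exactly as in Table~\ref{tab:BHVequations}). In every case $C_{1}\in\{1,5\}$ is squarefree, $C_{1}C_{2}\not\equiv 7\pmod{8}$, and the coprimality $\gcd(C_{1}X^{2},C_{2},w_{2}^{p})=1$ is inherited from $\gcd(x,r)=1$. A convenient uniformity appears: $C_{1}C_{2}$ is always five times a perfect square, so the squarefree part is $c=5$ independently of $r$.

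Next, apply Theorem~\ref{thm:Patel} with $p=7$. The first alternative is excluded by $p>5$; the third fails because $h(\Q(\sqrt{-5}))=2$ is not divisible by $7$; and the fourth requires verifying that $7\nmid q-\left(\frac{-5}{q}\right)$ for every odd prime $q\mid r$ with $q\neq 5$. Dispatching these three alternatives leaves only the second: $w_{2}\in\{3,5,9\}$. For each of the (at most) twelve pairs (descent case, $w_{2}$), the equation $C_{1}X^{2}+C_{2}=w_{2}^{7}$ fixes the right-hand side to a small integer, so $|X|$ and $|r|$ are bounded; a direct enumeration of the finitely many integer solutions reveals in every case that the reconstructed $x$ shares a prime factor with $r$ (typically $3$ or $5$), contradicting $\gcd(x,r)=1$.

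The main obstacle is the clean treatment of the fourth alternative of Theorem~\ref{thm:Patel}: $r$ is unbounded in this proposition and there certainly exist primes $q$ (for instance $q=13$) with $7\mid q-\left(\frac{-5}{q}\right)$. Dealing with such primes may require either an additional local analysis of the descent equation at $q$, or a supplementary argument constraining the prime divisors of $r$ via the original equation~\eqref{eq:mainthm}. By contrast, the descent recasting and the finite enumeration for $w_{2}\in\{3,5,9\}$ are entirely routine.
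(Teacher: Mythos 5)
Your proposal is essentially the paper's own proof: the paper recasts descent cases 1--4 exactly via the substitutions of Table~\ref{tab:BHVequations}, invokes Theorem~\ref{thm:Patel} with $p=7$ to conclude $w_{2}\in\{3,5,9\}$, and then checks that each such $w_{2}$ forces $\gcd(x,r)>1$. The only point of divergence is the caveat you raise about the fourth alternative of Theorem~\ref{thm:Patel}: the paper's proof does not address it at all, asserting without comment that the theorem leaves only $w_{2}\in\{3,5,9\}$. Your concern is legitimate --- since $c=5$ and $d$ is a multiple of $r$, a prime such as $q=13$ (for which $q-\left(\tfrac{-5}{q}\right)=14$) dividing $r$ would activate that alternative, and $r$ is unbounded in the proposition as stated --- so in flagging this you have identified an omission in the published argument rather than a defect peculiar to your own; to reproduce the paper's proof you would simply drop the caveat, while to make the proposition fully rigorous one would indeed need the supplementary analysis you describe.
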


\section{Equation elimination: 6.7 billion to solve }
 \label{sec: milliontosolve}

To solve all (approximately $6.7\times 10^9$) of
the remaining equations in variables $w_{1}$ and $w_{2}$,
we  employ a combination of different criteria  to  finally resolve each one of the remaining eight descent cases. The implementation of these tests in \texttt{MAGMA} \cite{magma} eliminates equations with no solutions.

\subsection{Sophie Germain's empty set criteria}
\label{subsec:elim1}

First, we  apply the ``empty set'' criteria given in the following lemma. This result is based on work of Sophie Germain and gives a criteria for the nonexistence of solutions to \eqref{eq: Fermatpp2}. For each prime $p$, the criteria constructs an auxiliary prime $q$ and a set $\mathcal{S}(p,q)$. Since the elements of $\mathcal{S}(p,q)$ lie in the finite field $\F_{q}$, the criteria is computationally efficient. Upon comparing Table~\ref{tab:countingeqs} with Tables~\ref{tab:case1234} and \ref{tab:case5678} in Section~\ref{sec:SolveIT}, we see that this criterion is indeed powerful as only a small proportion of equations survive after its application.

\begin{lemma} \cite[Lemma 7.2.1]{Patel17}
    \label{lem:Sophiecriterion}
Let $p \ge 3$ be a prime. Let $a$, $b$ and $c$ be positive coprime integers.
Let $q=2kp+1$ be a prime such that $q\nmid a$.
Define
\begin{equation*}
\mathcal{S}^{\prime}(p,q)=\{ \eta^{2p} \; : \; \eta \in \F_q \}
=\{0\} \cup \{ \zeta \in \F_q^* \; : \; \zeta^{k}=1\}
\end{equation*}
and
$$
\mathcal{S}(p,q)=\left\{ \zeta \in \mathcal{S}^{\prime}(p,q) \; : \; ((b \zeta+c)/a)^{2k} \in \{0,1\} \right\} \, .
$$
If $\mathcal{S}(p,q)=\emptyset$,
then equation~\eqref{eq: Fermatpp2} 
does not have integral solutions.
\end{lemma}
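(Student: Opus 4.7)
The plan is to derive a contradiction by reducing the equation modulo the auxiliary prime $q$ and exploiting the structure of the cyclic group $\F_q^*$, which has order $2kp$ by the choice $q = 2kp+1$.

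First I would record two structural facts about power residues in $\F_q$. Since $\F_q^*$ is cyclic of order $2kp$, the image of the $2p$-th power map on $\F_q^*$ is the unique subgroup of index $2p$, namely the subgroup of order $k$ consisting of the $k$-th roots of unity. Including $0$, this gives the second description of $\mathcal{S}^{\prime}(p,q)$ and shows that every $2p$-th power in $\F_q$ lies in $\mathcal{S}^{\prime}(p,q)$. An analogous argument identifies the image of the $p$-th power map on $\F_q^*$ as the subgroup of order $2k$, i.e.\ the $2k$-th roots of unity; equivalently, every $p$-th power $\alpha^p \in \F_q$ satisfies $(\alpha^p)^{2k} \in \{0,1\}$.

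Next I would argue by contradiction. Suppose that an integral solution $(w_1, w_2)$ exists, and set $\zeta := w_1^{2p} \bmod q \in \F_q$. By the first structural fact, $\zeta \in \mathcal{S}^{\prime}(p,q)$. Since $q \nmid a$, the coefficient $a$ is invertible in $\F_q$, and reducing the equation modulo $q$ yields
\[
w_2^p \equiv (b\zeta + c)/a \pmod q.
\]
Applying the second structural fact to $w_2^p$ gives $((b\zeta+c)/a)^{2k} \in \{0,1\}$, which is precisely the defining condition of $\mathcal{S}(p,q)$. Hence $\zeta \in \mathcal{S}(p,q)$, contradicting the hypothesis $\mathcal{S}(p,q) = \emptyset$.

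The argument amounts to a short congruence calculation in a cyclic group, so I do not anticipate any real obstacle. The only point requiring care is correctly identifying the subgroups of $p$-th and $2p$-th powers in $\F_q^*$ together with their orders, which is forced by the index computation in $\Z/2kp\Z$; the coprimality hypothesis on $a,b,c$ is used only indirectly, to ensure that the reduction modulo $q$ is meaningful once $q \nmid a$.
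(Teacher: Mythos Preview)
Your argument is correct and is exactly the standard proof of this Sophie Germain--type criterion. The paper itself does not give a proof; it simply quotes the lemma from \cite{Patel17}, so there is nothing to compare against beyond noting that your reduction modulo $q$ and identification of the $p$-th and $2p$-th power images in the cyclic group $\F_q^*$ of order $2kp$ is precisely the intended reasoning. One small remark: as written in the paper, equation~\eqref{eq: Fermatpp2} has constant term $cr^2$ rather than $c$, so in the reduction step the element is $(b\zeta + cr^2)/a$; the lemma statement tacitly absorbs $r^2$ into $c$, and your proof follows that convention consistently.
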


\subsection{Local solubility}\label{subsec:localsol}
To the equations that survive Lemma~\ref{lem:Sophiecriterion}, we apply a classical local solubility test to eliminate further equations. We outline the procedure below.

For a fixed valued of $r$, the ternary equations from the descent are of the form  
\begin{equation} \label{eq:descent} aw_{2}^{p}-bw_{1}^{2p}=c \end{equation}
and satisfy $\gcd(a, b, c) =1$. If $g = \text{Rad}(\gcd(a, c))>1$, the $\gcd$ assumption yields $g\mid w_{1}$ and we can write $w_{1}=gw_{1}'$, $a=gd$ and $c=gf$. Thus from 
equation~\eqref{eq:descent} we obtain
$$dw_{2}'^{p}-ew_{1}'^{2p}=f,$$
where $w_{2}'=w_{2}$ and $e=g^{2p-1}$. 
The repetition of similar arguments yields 
\begin{equation} \label{eq:descentcoprime} 
D\lambda^{p}-E\mu^{2p}=F\nu,
\end{equation} 
where $D, E$ and $F$ are now pairwise coprime. Now we can study the local behaviour of \eqref{eq:descentcoprime} at different primes. 
Let $q$ be a prime.
\begin{itemize}
    \item Suppose $q\mid D$. 
    From reducing \eqref{eq:descentcoprime} modulo $q$, it follows that $-EF$ is a quadratic residue modulo $q$.

    \item Suppose $q \equiv 1 \pmod{p}$. Then we can view \eqref{eq:descentcoprime} as an equation in $\F_{q}$. If we assume that $q\mid D$, we see that \eqref{eq:descentcoprime} can only have (non-trivial) solutions if $-F/E$ is a $2p$-power in $\F_{q}$. Similarly, if $q\mid E$ (resp. $q\mid F$), then \eqref{eq:descentcoprime} has solutions over $\F_{p}$ if $F/D$ (resp. $E/D$) is a $p$-power.  
    
    \item Suppose $q\in \{2, 3, 5, 7, p\}$ or $q\mid DEF$. Then we can use the local solubility implementation in \texttt{MAGMA} \cite{magma} to test if the rational projective curve given by \eqref{eq:descentcoprime} has $\Q_{q}$--rational points. 
\end{itemize}

\subsection{Descent over number fields}
To the equations that survive Sections \ref{subsec:elim1} and \ref{subsec:localsol}, we apply a further descent over number fields. 
With $D,\; E,\; F$ as in \eqref{eq:descentcoprime}, we write  
$$ E'= \prod_{\ord_{q} E \text{ is odd }} q.$$ 
Hence, $EE'=s^{2}$ for some $s\in\Z$. 
Write $DE'=r$ and $FE'=n^2m$ with $m$ squarefree. Then we can rewrite \eqref{eq:descentcoprime} as
$$ r\rho^{p} = (s\kappa^{p}+n\sqrt{-m})(s\kappa^{p}-n\sqrt{-m}),$$ 
where $\rho=\lambda^{p}$ and $\kappa=\mu$. Let $K= \Q(\sqrt{-m})$ and $\mathcal{O}$ be its ring of integers. 
Let 
\[
\mathfrak{G}=\{\mathcal{P}\subset\mathcal{O} : \mathcal{P}\text{ is a prime ideal and } \mathcal{P}\mid r\text{ or }\mathcal{P}\mid 2n\sqrt{{-m}}\}.
\]
Let $\tau = (s\kappa^{p}+n\sqrt{-m})$.  
If $\mathcal{P} \not\in \mathfrak{G}$, then $\ord_{\mathcal{P}}(\tau \overline{\tau})=p\ord_{\mathcal{P}}(\rho)$. By assumption, $\ord_{\mathcal{P}}(\tau+ \overline{\tau})=\ord_{\mathcal{P}}(2n\sqrt{-m})=0$. Hence, the equivalence class of $\tau$ in $K^{\ast}/(K^{\ast})^{p}$ is an element of the ``$p$-Selmer group''
$$ K(\mathfrak{G}, p) = \{ \varepsilon\in K^{\ast}/(K^{\ast})^{p}: \ord_{\mathcal{P}}(\varepsilon) \equiv 0 \pmod{p} \text{ for all } \mathcal{P}\not\in\mathfrak{G}\}.$$
This is an $\F_{p}$ vector space that can be computed by \texttt{MAGMA} \cite{magma} using the command  \texttt{pSelmerGroup}. 
Then, 
\begin{equation}
    \label{eq:pselmer}
    (s\kappa^{p}+n\sqrt{-m})=\varepsilon\eta^{p},
\end{equation}
for some $\eta \in K^{\ast}$, and $\varepsilon \in \mathcal{E} := \{\varepsilon \in K(\mathfrak{G}, p) : \text{norm}(\varepsilon)/r\in (\Q^{\ast})^{p}\}.$ 
This yields the following two criteria analogous to Lemma~\ref{lem:Sophiecriterion}, and local solubility techniques over $\Q$.

\begin{lemma} \cite[Lemma 9.2.1]{Patel17} 
Let $K=\Q(\sqrt{-m})$, and let $\mathfrak{q}$ be a prime ideal of $K$. Suppose one of the following holds:
\begin{itemize}
    \item[(i)]  $\ord_{\mathfrak{q}}(s),\;  \ord_{\mathfrak{q}}(n\sqrt{-m}),\; \ord_{\mathfrak{q}}(\varepsilon)$ are pairwise distinct modulo $p$;
    \item[(ii)] $\ord_{\mathfrak{q}}(2s),\; \ord_{\mathfrak{q}}(\varepsilon),\;  \ord_{\mathfrak{q}}(\overline{\varepsilon})$ are pairwise distinct modulo $p$;
    \item[(iii)]  $\ord_{\mathfrak{q}}(2n\sqrt{-m}),\; ord_{\mathfrak{q}}(\varepsilon),\; \ord_{\mathfrak{q}}(\overline{\varepsilon})$ are pairwise distinct modulo $p$.
\end{itemize}
Then there is no $\kappa \in \Z$ and $\eta \in K$ satisfying ~\eqref{eq:pselmer}.
\end{lemma}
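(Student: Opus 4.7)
The plan is to argue by contradiction: assume equation \eqref{eq:pselmer} holds for some $\kappa \in \Z$ and $\eta \in K$, and derive a contradiction in each of the three cases by computing $\ord_{\mathfrak{q}}$ of both sides of suitably chosen identities and using the basic ultrametric fact that $\ord_{\mathfrak{q}}(\alpha+\beta) = \min\{\ord_{\mathfrak{q}}(\alpha), \ord_{\mathfrak{q}}(\beta)\}$ whenever $\ord_{\mathfrak{q}}(\alpha) \neq \ord_{\mathfrak{q}}(\beta)$. A common feature throughout is that $\ord_{\mathfrak{q}}(\gamma^p) = p\cdot \ord_{\mathfrak{q}}(\gamma) \equiv 0 \pmod p$, so that any $p$-th power is invisible modulo $p$ in the valuation.

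For case (i), I would apply $\ord_{\mathfrak{q}}$ directly to \eqref{eq:pselmer}. The right-hand side has valuation $\equiv \ord_{\mathfrak{q}}(\varepsilon) \pmod p$. For the left-hand side $s\kappa^p + n\sqrt{-m}$: if $\ord_{\mathfrak{q}}(s\kappa^p) \neq \ord_{\mathfrak{q}}(n\sqrt{-m})$ then the total valuation is the minimum of the two, which is congruent mod $p$ to either $\ord_{\mathfrak{q}}(s)$ or $\ord_{\mathfrak{q}}(n\sqrt{-m})$; the pairwise distinctness hypothesis then forbids this from matching $\ord_{\mathfrak{q}}(\varepsilon) \pmod p$. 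If instead $\ord_{\mathfrak{q}}(s\kappa^p) = \ord_{\mathfrak{q}}(n\sqrt{-m})$, then reducing mod $p$ gives $\ord_{\mathfrak{q}}(s) \equiv \ord_{\mathfrak{q}}(n\sqrt{-m}) \pmod p$, again contradicting the hypothesis.

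For cases (ii) and (iii), I would instead consider the conjugate equation $\overline{\tau} = \overline{\varepsilon}\,\overline{\eta}^p$ and use the two identities
\[
2s\kappa^p = \tau + \overline{\tau} = \varepsilon\eta^p + \overline{\varepsilon}\,\overline{\eta}^p, \qquad 2n\sqrt{-m} = \tau - \overline{\tau} = \varepsilon\eta^p - \overline{\varepsilon}\,\overline{\eta}^p.
\]
For (ii), apply $\ord_{\mathfrak{q}}$ to the first identity: the left side is $\equiv \ord_{\mathfrak{q}}(2s) \pmod p$, while the right side, by the same case analysis as before, must be congruent mod $p$ to $\ord_{\mathfrak{q}}(\varepsilon)$ or $\ord_{\mathfrak{q}}(\overline{\varepsilon})$ (or else these two are congruent mod $p$); either outcome contradicts the pairwise distinctness of the three values $\ord_{\mathfrak{q}}(2s)$, $\ord_{\mathfrak{q}}(\varepsilon)$, $\ord_{\mathfrak{q}}(\overline{\varepsilon})$. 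Case (iii) is identical after replacing the first identity by the second.

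There is no real obstacle: the argument is a clean valuation-theoretic trichotomy, with the only subtlety being the bookkeeping in the degenerate subcase where two of the terms have equal valuation (which immediately forces two of the hypothesized-distinct residues to coincide). The lemma is then proven by taking the contrapositive.
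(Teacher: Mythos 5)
The paper does not prove this lemma itself --- it is quoted verbatim from \cite[Lemma 9.2.1]{Patel17} --- and your argument is precisely the standard one behind that cited result: take $\ord_{\mathfrak{q}}$ of \eqref{eq:pselmer} (resp.\ of $\tau+\overline{\tau}=2s\kappa^p$ and $\tau-\overline{\tau}=2n\sqrt{-m}$), use that $p$-th powers contribute $0$ modulo $p$, and apply the ultrametric equality for sums of terms with distinct valuations. Your proof is correct and matches the intended approach; the only cosmetic point is to note the degenerate subcases $\kappa=0$ or $\eta=0$, which are dispatched by the same valuation comparison.
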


\begin{lemma} \cite[Lemma 9.2.2]{Patel17}  
Let $q = 2kp + 1$ be a prime. Suppose $q{\mathcal{O}} = \q_1 \q_2$ where $\q_1, \q_2$ are distinct,
and such that $\ord_{\q_{j}} (\varepsilon) = 0$ for $j = 1, 2$. Let
$$\chi^{\prime}(p, q) = \{\eta^{p} : \eta \in \F_{q} \}.$$
Let
$$ \chi(p, q) = \{\zeta \in \chi^{\prime}(p, q) : ((s\zeta + n\sqrt{-m})/\varepsilon)^{2k} \equiv 0 \text{ or } 1 \text{ mod } \q_j \text{ for } j = 1, 2\}.$$
Suppose $\chi(p, q) = \emptyset$. Then there is no $\kappa \in \Z$ and $\eta \in K$ satisfying ~\eqref{eq:pselmer}.
\end{lemma}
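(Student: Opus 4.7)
The plan is to prove the contrapositive: assume there exist $\kappa\in\Z$ and $\eta\in K$ satisfying \eqref{eq:pselmer}, and produce an element witnessing $\chi(p,q)\neq\emptyset$. The strategy mirrors that of Lemma~\ref{lem:Sophiecriterion}, but transposed from $\F_q$ to the two residue fields at the primes above $q$ in $K$.

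First I would fix $j\in\{1,2\}$ and reduce the identity $s\kappa^{p} + n\sqrt{-m} = \varepsilon\eta^{p}$ modulo $\mathfrak{q}_j$. Since $q$ splits in $K$, the residue field $\mathcal{O}/\mathfrak{q}_j$ is isomorphic to $\F_q$; and since $\varepsilon$ has zero valuation at $\mathfrak{q}_j$ by hypothesis, $\varepsilon$ is invertible modulo $\mathfrak{q}_j$. Rearranging gives
\[
\frac{s\kappa^{p} + n\sqrt{-m}}{\varepsilon} \equiv \eta^{p} \pmod{\mathfrak{q}_j}.
\]
Set $\zeta := \kappa^{p} \bmod \mathfrak{q}_j$. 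Since $\kappa\in\Z$, its reduction modulo $\mathfrak{q}_j$ lies in the prime subfield $\F_q\subset \mathcal{O}/\mathfrak{q}_j$, so $\zeta = (\kappa \bmod q)^{p} \in \chi'(p,q)$; crucially, $\zeta$ is independent of the choice of $j$.

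Next I would raise both sides to the $2k$-th power and exploit the hypothesis $q=2kp+1$: for every $\alpha\in (\mathcal{O}/\mathfrak{q}_j)^{*} \cong \F_q^{*}$, Fermat's little theorem gives $\alpha^{2kp}=\alpha^{q-1}=1$. Hence $(\eta^{p})^{2k}\in\{0,1\}$ according as $\eta\in\mathfrak{q}_j$ or not, and the congruence above forces
\[
\left(\frac{s\zeta + n\sqrt{-m}}{\varepsilon}\right)^{\!2k} \equiv 0 \text{ or } 1 \pmod{\mathfrak{q}_j}.
\]
Running this for both $j=1,2$ with the common value $\zeta$ places $\zeta$ in $\chi(p,q)$, contradicting the hypothesis $\chi(p,q)=\emptyset$.

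I do not expect a serious obstacle. The only subtle point is that $\sqrt{-m}$ reduces to one of two distinct square roots of $-m$ in $\F_q$, possibly different modulo $\mathfrak{q}_1$ versus $\mathfrak{q}_2$, but this asymmetry is harmless: the defining condition of $\chi(p,q)$ is checked at the two primes separately, whereas the element $\zeta$, coming from an integer, has the same image in both residue fields, which is precisely what is needed to conclude membership of the single element $\zeta$ in $\chi(p,q)$.
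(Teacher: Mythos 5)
Your argument is correct and is essentially the standard proof of this criterion: reduce \eqref{eq:pselmer} modulo each prime $\q_j$ above $q$ (noting $\eta$ is $\q_j$-integral since $\ord_{\q_j}(\varepsilon)=0$ and the left side is integral), set $\zeta=\kappa^p\bmod q$, and use $\eta^{2kp}=\eta^{q-1}\in\{0,1\}$ to place $\zeta$ in $\chi(p,q)$. The paper states this lemma without proof, citing \cite[Lemma 9.2.2]{Patel17}, and your argument matches the one given there, including the correct observation that $\zeta$ comes from an integer and hence is the same element in both residue fields.
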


\section{The final resolution!}\label{sec:SolveIT}

In this section, we apply the techniques described in Sections~\ref{sec:descandbound} and \ref{sec: milliontosolve} in order to resolve all the remaining equations.  
To this end, we run a \texttt{MAGMA} \cite{magma} script
which implements the mathematical bounds and tests outlined in Sections~\ref{sec:descandbound} and \ref{sec: milliontosolve}. 
This completes the proof of Theorem~\ref{thm:main}.

\subsection{Full resolution of  Cases 1 to 4}
\label{sec:case1234}

We first record computational data for Cases 1--4. These cases 
are amenable to Theorem~\ref{thm:Patel}, which provides an extremely fast test to eliminate exponents. Further, we have already dealt with small prime exponents $5$ and $7$ in Section~\ref{sec:prime57}.
Recall our exponent is bounded above via Mignotte's Theorem (see Table~\ref{tab:descent}) and thus we have a finite number of equations to resolve in two unknown variables. We take all remaining equations through the tests outlined in Section~\ref{sec: milliontosolve}, and record the outcome in Table~\ref{tab:case1234}.

\begin{table}[htbp!]
\begin{adjustbox}{width =\textwidth}
\begin{tabular}{|c|c|c|c|c|}
\hline
\text{Exponent }$p$ & \makecell{Number of eqns\\ surviving
\\ empty set criterion} 
& \makecell{Number of eqns \\surviving local \\solubility tests} & \makecell{Number of eqns \\surviving \\further descent} & \makecell{Thue eqns \\ not solved  \\ by \texttt{MAGMA}} \\
\hline
11 & 6143 & 703 & 0 & 0\\
\hline
13 & 422 & 127 & 0 & 0\\
\hline
17 & 208 & 37 & 0 & 0\\
\hline
19 & 794 & 341 & 0 & 0\\
\hline
23 & 5 & 1 & 0 & 0\\
\hline
29 & 0 & 0 & 0 & 0\\
\hline
31 & 5 & 3 & 0 & 0\\
\hline
37 & 1 & 0 & 0 & 0\\
\hline
$41\leq p \leq 142861$& 0 & 0 & 0 & 0\\
\hline
\end{tabular}
\end{adjustbox}
\caption{A breakdown of the computations for Cases 1--4} 
\label{tab:case1234}

\end{table}

We remark once more that application of Theorem~\ref{thm:Patel} and  Chabauty techniques significantly reduced the number of equations that needed to be resolved, thereby significantly reducing the computation time (see Table~\ref{tab:comptime14}).

\medskip

Moreover, Theorem~\ref{thm:Patel} allowed us to quickly bypass certain exponents that would otherwise pass the insolubility tests of Section~\ref{sec: milliontosolve} and give rise to intractable Thue equations.
One such example arises when considering descent case 3. We choose the  pair $(r,p)=(390625, 17)$.
The ternary descent equation arising from this case is given by
\begin{equation}
    \label{eq:ThueFailure}
        w_{2}^{17}-2^{28}\cdot 3^{30}\cdot w_{1}^{34}=5\cdot 390625^2.
\end{equation}
We found that the techniques outlined in Section~\ref{subsec:elim1} fail to resolve \eqref{eq:ThueFailure}. 
This is merely one example where the strategies outlined in previous work \cite{Garcia2019, GarciaPatel2019, GarciaPatel2020},
are alone insufficient to prove  Theorem~\ref{thm:main}. Another example of an intractable Thue equation bypassing all tests of Section~\ref{sec: milliontosolve}, arising from descent case $2$ with exponent $19$ and $r=262144$ can be found in \cite{CCIKPUSurvey}.

\subsection{Full resolution of  Cases 5 to 8}
\label{sec:case5678}

We now turn to Cases 5--8. Unfortunately,
we are unable to apply Theorem~\ref{thm:Patel} to quickly discard exponents, or deal with the exponent $7$ in an efficient manner. Computationally, a full resolution is still achieved, albeit, much less efficiently. See Table~\ref{tab:comptime14} for a comparison of the computational times. 
We recall that we have dealt with the exponent $p=5$ in Section~\ref{sec:prime57} using Chabauty.

In contrast with cases 1--4  
we now find equations that survive Sophie Germain's criteria,  local solubility tests and the second descent (over a number field). These remaining equations all have exponent $7$ and occur at the following values of $r$:
\[
 r\in \{2401,277360, 352832, 389176, 729296, 809336, 826864, 903464, 979616\}.
\]
These 9 equations are resolved using the Thue solver in \texttt{Magma} which is based on an algorithm of Bilu and Hanrot \cite{BILU1996373}, and Tzanakis and Weger \cite{TW89}. No solutions are found.

\begin{table}[H]
\begin{adjustbox}{width =\textwidth}
\begin{tabular}{|c|c|c|c|c|}
\hline
\text{Exponent }$p$ & \makecell{Number of eqns\\ surviving
\\ empty set criterion} 
& \makecell{Number of eqns \\surviving local \\solubility tests} & \makecell{Number of eqns \\surviving \\further descent} & \makecell{Thue eqns \\ not solved  \\ by \texttt{MAGMA}} \\
\hline
7 & 117938 & 57619 & 9 & 0\\
\hline
11 & 21525 & 12246 & 0 & 0 \\
\hline
13 & 4447 & 2455 & 0 & 0 \\
\hline
17 & 950 & 473 & 0 & 0 \\
\hline
19 & 2374 & 1472 & 0 & 0 \\
\hline
23 & 36 & 15 & 0 & 0 \\
\hline
29 & 18 & 11 & 0 & 0 \\
\hline
31 & 64 & 39 & 0 & 0 \\
\hline
37 & 3 & 2 & 0 & 0\\
\hline
41 & 1 & 0 & 0 & 0\\
\hline
$43\leq p \leq 78880$& 0 & 0 & 0 & 0 \\
\hline
\end{tabular}
\end{adjustbox}
\caption{A breakdown of the computations for Cases 5--8} 
\label{tab:case5678}
\end{table}

\subsection{Computational Data}
\label{sec:compdat}
We list the approximate computational times for each descent case here.  Computations ran on an \texttt{Intel(R) Xeon(R) CPU E5-2683 v3 @ 2.00GHz}, and split over 8 processors, took roughly 19 days to complete.

    \begin{table}[H]
\begin{tabular}{|c|c|c|c|}
\hline
Descent case & Computation Time & Descent case & Computation Time\\
\hline
1 & $\sim 21$ minutes & 5 & $\sim 160$ hours\\
\hline
2 & $\sim 23$ minutes & 6 & $\sim 450$ hours\\
\hline
3 & $\sim 10$ minutes & 7 & $\sim 50$ hours\\
\hline
4 & $\sim 16$ minutes & 8 & $\sim 61$ hours\\
\hline
\end{tabular}
\caption{Computation times for descent cases 1--8}
\label{tab:comptime14}

\end{table}

\bibliographystyle{siam}
\bibliography{ref.bib}

\end{document}